\newtheorem{theorem}{Theorem}
\newtheorem{lemma}{Lemma}
\newtheorem{remark}{Remark}
\theoremstyle{definition}
\begin{document}
\title{Constraint Energy Minimizing Generalized Multiscale Finite Element Method}

\author{
Eric T. Chung \thanks{Department of Mathematics,
The Chinese University of Hong Kong (CUHK), Hong Kong SAR. Email: {\tt tschung@math.cuhk.edu.hk}.
The research of Eric Chung is supported by Hong Kong RGC General Research Fund (Project 14317516).}
\and
Yalchin Efendiev \thanks{Department of Mathematics \& Institute for Scientific Computation (ISC),
Texas A\&M University,
College Station, Texas, USA. Email: {\tt efendiev@math.tamu.edu}.}
\and
Wing Tat Leung \thanks{Department of Mathematics, Texas A\&M University, College Station, TX 77843}
}

\maketitle

\begin{abstract}
In this paper, we propose Constraint Energy Minimizing Generalized
Multiscale Finite Element Method (CEM-GMsFEM).
The main goal of this paper
is to design multiscale basis functions within GMsFEM framework such that
the convergence of
method is independent of the contrast and
linearly decreases with respect to mesh size if oversampling size
is appropriately chosen.
We would like to show a mesh-dependent convergence
with a minimal number of basis functions.
Our construction starts with
an auxiliary multiscale space by solving local spectral problems.
In auxiliary multiscale space,
we select the basis functions that correspond to small (contrast-dependent)
eigenvalues. These basis functions represent the channels
(high-contrast features that connect the boundaries of the coarse block).
Using the auxiliary space, we propose a constraint energy minimization to
construct multiscale spaces. The minimization is performed
in the oversampling domain, which is larger than the target coarse block.
The constraints allow handling
non-decaying components of the local minimizers. If the auxiliary space
is correctly chosen, we show that the convergence rate is independent of
the contrast (because the basis representing the channels
are included in the auxiliary space) and is proportional
to the coarse-mesh size (because the constrains handle non-decaying components
of the local minimizers).
The oversampling size weakly depends on the contrast as our analysis
shows.
The convergence theorem requires that channels are not aligned
with the coarse edges, which hold in many applications, where
the channels are oblique with respect to the coarse-mesh geometry.
The numerical results confirm our theoretical results.
In particular, we show that if the oversampling domain size is not
sufficiently large, the errors are large. To remove the
contrast-dependence of the oversampling size, we propose
a modified construction for basis functions and present
numerical results and the analysis.

\end{abstract}

\section{Introduction}

%{\bf General discussions on  multiscale and high contrast.}
Many practical applications contain multiple scales and high contrast.
These include flows in fractured media, processes in channelized porous
media and so on. Due to scale disparity and the contrast, some type
of coarse-grid models are used to solve these problems.
The coarse grid is typically much larger than the fine-grid size
and it (the coarse grid)
 contains many heterogeneities and high contrast. In modeling
and simulations of multiscale problems,
it is difficult to adjust coarse-grid sizes based
on scales and contrast. Thus, it is important that the numerical
performance is independent of these physical parameters.

%{\bf Some existing multiscale methods.}
There have been many existing approaches in the literature to handle
multiscale problems. In this paper, we focus on
Darcy flow equation in heterogeneous media.
These multiscale approaches include
homogenization approaches \cite{bour84,dk92,cp95},
numerical upscaling methods \cite{dur91,cdgw03,weh02,durlofsky2003upscaling},
multiscale finite element methods \cite{hw97},
variational multiscale methods
\cite{hughes95,bfhr97,hfmq98, npp08, jp05},
heterogeneous multiscale methods \cite{ee03,abdul_yun,ming2007analysis,abdulle2012heterogeneous,engquist2013heterogeneous},
mortar multiscale methods \cite{pwy02, apwy07, pes05},
localized orthogonal decomposition
methods \cite{maalqvist2014localization},
equation-free approaches \cite{kghkrt03, rk07, pk07,lkgk07},
generalized multiscale finite element methods
\cite{egh12,chung2016adaptive,chung2015generalizedwave}
 and so on.
Some of these approaches are based on homogenization methods
and compute effective properties.
Once the effective properties are computed,
the global problem is solved on the coarse grid.
Our methods are in the class of multiscale finite element methods,
where we seek multiscale basis functions to represent the local
heterogeneities. In multiscale methods, one constructs
multiscale basis functions that can capture the local oscillatory
behavior of the solution.

%{\bf GMsFEM.}
Our approaches are based on Generalized Multiscale Finite Element Method
(GMsFEM), \cite{egh12,chung2016adaptive,chung2015generalizedwave}.
 This approach, as MsFEM, constructs multiscale basis functions
in each coarse element via local spectral problems. Once local
snapshot space is constructed, the main idea of the GMsFEM is to
solve local spectral problems and identify multiscale basis functions.
These approaches share some common elements with multi-continuum approaches
and try to identify high-contrast features that need to be represented
individually. These non-local
features are typically channels (high-contrast regions
that connect the boundaries of the coarse grid) and need separate (individual)
basis functions. These observations about representing channels
separately are consistent with multi-continuum
methods; however, GMsFEM provides a general framework for deriving
coarse-grid equations.
We note that
the localizations of channels are not possible,
in general, and this is the reason for
constructing basis functions for channels separately as discussed in
\cite{egw10,ge09_1reduceddim}.
These ideas are first used in
designing optimal preconditioners \cite{ge09_1reduceddim}.
In GMsFEM, the local spectral problems and snapshots, if identified
appropriately, correctly identify
the necessary channels without any geometric
interpretation.

%{\bf GMsFEM  convergence in terms of mesh and we use localized ideas}
It was shown that the GMsFEM's convergence depends on the eigenvalue
decay \cite{chung2014adaptive}.
 However, it is difficult to show a coarse mesh dependent
convergence without using oversampling and many basis functions.
In this paper, we would like to show a mesh-dependent convergence
with a minimal number of basis functions.
The convergence analysis of the GMsFEM suggests that
one needs to include eigenvectors corresponding to small eigenvalues
in the local spectral decomposition. We note that these small
eigenvalues represent the channelized features, as we discussed above.
 To
obtain a mesh-dependent convergence, we use the ideas
from \cite{owhadi2014polyharmonic, maalqvist2014localization, hou2017}\footnote{We learned about \cite{hou2017} in IPAM workshop (April 2017), which is similar to
Section \ref{sec:ms}  (and Section \ref{sec:anal})
and done independently and earlier
 by Tom Hou and Pengchuan Zhang.},
which consists of using oversampling domains and obtaining decaying
local solutions. For high-contrast problems, the local solutions
do not decay in channels and thus, we need approaches that can
take into account the information in the channels when constructing
the decaying local solutions.

%{\bf Our method: (1) existing methods close;
%(2) Our method and main difference - contrast..}
The proposed approach starts with auxiliary multiscale basis functions
constructed using the GMsFEM in each coarse block.
This auxiliary space contains the information related to channels
and the number of these basis functions is the same as the number
of the channels, which is a minimal number of basis
functions required representing high-contrast features.
 This auxiliary space is used to take care of the
non-decaying component of the oversampled local solutions, which
occurs in the channels. The construction of multiscale basis functions
is done by seeking
 a minimization of a functional subject to a constraint such that
the minimizer is orthogonal (in a certain sense)
 to the auxiliary space. This
allows handling non-decaying component of the oversampled local solutions.
The resulting approach contains several basis functions per element
and one can use an adaptivity (\cite{chung2014adaptive})
to define the basis functions.
This construction allows obtaining the convergence rate $H/\Lambda$,
where $\Lambda$ is the minimal eigenvalue that the corresponding
eigenvector is not included in the space. Our analysis
also shows that the size of the oversampling domain depends on
the contrast weakly (logarithmically).
To remove the contrast dependence of the oversampling domain size,
we propose a modified algorithm. In this algorithm, we use
the same auxiliary space; however, the minimization is done by relaxing
the constraint.

In the paper, we present numerical results for two heterogeneous
permeability fields.
In both cases, the permeability fields contain channels and inclusions
with high conductivity values. We select  auxiliary basis
functions such that to include all channelized features (i.e.,
the eigenvectors corresponding to very small (contrast-dependent)
eigenvalues).  Our numerical results show that the error decays
as we decrease the coarse-mesh size; however, this is sensitive
to the oversampling domain size. We present numerical results
that show that if there is not sufficient oversampling, the errors
are large and contrast dependent. Furthermore, we also present the numerical
results for our modified algorithm and show that this contrast
 dependence
is removed and the oversampling domain sizes are less sensitive to
the contrast.

The paper is organized as follows. In Section \ref{sec:prelim},
we present some preliminaries. In Section \ref{sec:ms}, we present
the construction of multiscale basis functions.
We present the analysis of the approach in Section \ref{sec:anal}.
In Section \ref{sec:num},
we present numerical results. In Section \ref{sec:mod},
we discuss an extension and show a modified basis construction.
The conclusions are presented in Section \ref{sec:conclusions}.

\section{Preliminaries}
\label{sec:prelim}

We consider
\begin{equation}
-\mbox{div}\big(\kappa(x)\,\nabla u\big)=f\quad\text{in}\quad\Omega \subset \mathbb{R}^d,\label{eq:original}
\end{equation}
where $\kappa$ is a high-contrast with $\kappa_0 \leq \kappa(x) \leq \kappa_1$
and is a multiscale field. The above equation is
subjected to the boundary conditions $u=0$ on
$\partial\Omega$.
Next, the notions of fine and coarse grids are introduced.
Let
$\mathcal{T}^{H}$ be a conforming partition
of  $\Omega$ into finite elements.
Here, $H$ is the coarse-mesh size and this partition is called
coarse grid.
We let $N_c$ be the number of vertices and $N$ be the number of elements
in the coarse mesh.
We assume that
each coarse element is partitioned into
a connected union of fine-grid blocks
and this partition is called $\mathcal{T}^{h}$.
Note that $\mathcal{T}^{h}$
is a refinement of the coarse grid $\mathcal{T}^{H}$
with the mesh size $h$.
It is assumed that the fine grid is sufficiently fine to resolve
the solution.
An illustration of the fine grid, coarse grid, and
oversampling domain are shown
in Figure \ref{fig:illustration}.

\begin{figure}[ht!]
\centering
\includegraphics[width=3in, height=3in]{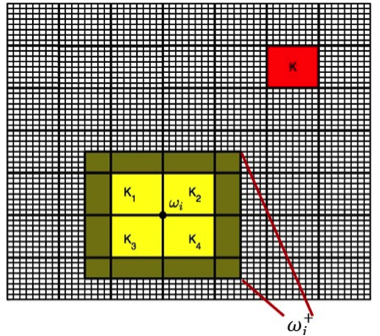}
\caption{Illustration of the coarse grid, fine grid and oversampling domain.}
\label{fig:illustration}
\end{figure}

%Multiscale basis functions are constructed on the fine grid.
We
let $V = H^1_0(\Omega)$. Then the solution $u$ of (\ref{eq:original}) satisfies
\begin{equation}
\label{eq:finesol}
a(u,v) = \int_{\Omega} fv \text{ for all } v\in V
\end{equation}
where $a(u,v)=\int_{\Omega}\kappa \nabla u \cdot \nabla v$.
We will discuss the construction of multiscale basis functions in the next section.
We consider $V_{ms}$ to be the space spanned by all multiscale basis functions.
Then the multiscale solution $u_{ms}$ is defined
 as the solution of the following problem, find $u_{ms}\in V_{ms}$ such that
\begin{equation}
\label{eq:mssol}
a(u_{ms},v) = \int_{\Omega} fv \text{ for all } v\in V_{ms}.
\end{equation}
Notice that, in order to show the performance of our method, we will compute
the solution of (\ref{eq:finesol}) on a fine mesh, which is fine enough to resolve the heterogeneities of the true solution $u$.
Moreover, the construction of the multiscale basis functions is also performed on the fine mesh, even
though the definition of the multiscale basis functions is constructed in the space $V$.
We will give the details in the next section.

The computation of the multiscale basis functions is divided
into two stages.
The first stage consists of
constructing the auxiliary multiscale basis functions by using
the concept of generalized multiscale finite element method (GMsFEM).
The next step is the construction of the multiscale basis functions.
In this step, a constrained energy minimizing is performed in the
oversampling
domain.
The construction of the multiscale basis function will be discussed
 in Section \ref{sec:ms}
In the next subsection, we will first introduce the basic concepts of GMsFEM.

\subsection{The basic concepts of Generalized Multiscale Finite Element Method}

Generalized Multiscale Finite Element Method (GMsFEM)
uses
two stages: offline and online. In the offline stage,
a small dimensional finite element space is constructed to solve
the global problem for any input parameter, such as a right-hand side
or boundary condition, on a coarse grid.

The snapshot space,  $V_{H,\text{snap}}^{(i)}$ is constructed
for a generic domain
$\omega_i$ or $K_i$.
For simplicity, we use the notation $\omega_i$, though
multiscale basis functions in Section \ref{sec:ms} will
be constructed in $K_i$.
The snapshot solutions are used to compute multiscale basis functions.
 The appropriate snapshot space
(1) can provide a faster convergence, (2)
can provide
problem relevant
restrictions on the coarse spaces (e.g., divergence free solutions)
and (3) can reduce the cost associated with constructing the offline spaces.

One can use various snapshot spaces (see \cite{chung2016adaptive}), which are
(1) all fine-grid functions; (2) harmonic snapshots; (3) oversampling
harmonic snapshots; and (4) foce-based snapshots. Here, we briefly discuss
harmonic snapshots in oversampling domain.

We briefly discuss the snapshot space that consists of harmonic
extensions of fine-grid functions that are defined on
the boundary of $\omega_i$.
For each fine-grid function, $\delta_l^h(x)$,
which is defined by
$\delta_l^h(x_k)=\delta_{l,k},\,\forall  x_k\in \textsl{J}_{h}(\omega_i)$, where
$\textsl{J}_{h}(\omega_i)$ denotes the set of fine-grid boundary nodes on $\partial\omega_i$, we obtain a snapshot function $\eta_l^{(i)}$ by
\[
\mathcal{L}( \eta_{l}^{(i)})=0\ \ \text{in} \ \omega_i
\]
with the boundary condition, $ \eta_{l}^{(i)}=\delta_l^h(x)$
on $\partial\omega_i$,
and  $\delta_{l,k} = 1$ if $l=k$ and $\delta_{l,k}=0$ if $l\ne k$.
We remark that the snapshot functions can be computed in
the oversampling region
 $\omega_i^{+}$.
In this case,
for each fine-grid function, $\delta_l^h(x)$,
$\delta_l^h(x_k)=\delta_{l,k},\,\forall x_k\in \textsl{J}_{h}(\omega_i^{+})$,
where $\textsl{J}_{h}(\omega_i^{+})$ denotes the set of fine-grid
boundary nodes on $\partial\omega_i^{+}$,
we obtain a snapshot function $\eta_l^{(i),+}$ by
\[
\mathcal{L}( \eta_{l}^{(i),+})=0\ \ \text{in} \ \omega_i^{+}
\]
with $ \eta_{l}^{(i),+}=\delta_l^h(x)$
on $\partial\omega_i^{+}$.
Finally, we remark that one can use randomized boundary conditions
to reduce the computational cost associated with the snapshot construction
\cite{chung2016adaptive, randomized2014}.

%More precisely, for each fine-grid function, $r_l^h(x)$,
%which is defined by
%$r_l^h(x_k)=r_{l,k},\,\forall x_k\in \textsl{J}_{h}(\omega_i^{+})$,
%where $r_{l,k}$ are random numbers,
%we obtain a snapshot function $\psi_l^{+,\omega_i}$ by
%\[
%\mathcal{L}( \psi_{l}^{+, \omega_i})=0\ \ \text{in} \ \omega_i^{+}
%\]
%subject to the boundary condition, $ \psi_{l}^{+, \text{snap}}=r_l^h(x)$
%on $\partial\omega_i^{+}$.

%%%%CHANGE THE TEXT%%%

The offline space, $V_{ms}^{(i)}$ is computed for each
$\omega_i$
 (with elements of the space denoted $\psi_l^{(i)}$).
We perform a spectral decomposition in the snapshot space and select
the dominant
(corresponding to the smallest eigenvalues)
to construct the offline (multiscale) space.
The convergence rate of the resulting method is proportional
to $1/\Lambda_*$, where
$\Lambda_*$ is the smallest eigenvalue that the corresponding eigenvector
is not included in the multiscale space.
We would like to select local spectral problem such that
we can remove many small eigenvalues with fewer multiscale basis
functions.

The spectral problem depends on the analysis.
In the analysis,
the error is decomposed into coarse subdomains.
The energy functional corresponding to
the domain $\Omega$ is denoted by $a_\Omega(u,u)$, e.g.,
$a_\Omega(u,u) = \int_\Omega \kappa \nabla u\cdot \nabla u$.
Then,
\begin{equation}
\begin{split}
a_\Omega(u-u_H,u-u_H)\preceq
\sum_\omega a_\omega(u^\omega-u_H^\omega,u^\omega-u_H^\omega),
\end{split}
\end{equation}
where $\omega$ are coarse regions ($\omega_i$),
$u^\omega$ is the localization of the solution.
%Here,  $u_H^\omega$ is the part of the solution $u_H$
%spanned by the basis in $\omega$.
The local spectral problem is chosen to bound
$ a_\omega(u^\omega-u_H^\omega,u^\omega-u_H^\omega)$.
We seek
the subspace $V_{ms}^{\omega}$
%(since $u^\omega \in V_{H,\text{snap}}^{\omega}$)
such that for any
$\eta\in V_{H,\text{snap}}^{\omega}$,
there exists
$\eta_0\in V_{ms}^{\omega}$ with,
\begin{equation}
\label{eq:off1}
a_{\omega}(\eta-\eta_0,\eta-\eta_0)\preceq {\delta}
s_{\omega}(\eta-\eta_0,\eta-\eta_0),
\end{equation}
where
$s_{\omega}(\cdot,\cdot)$ is an auxiliary bilinear form.
The auxiliary bilinear form needs to be chosen such that the solution
is bounded in the corresponding norm.
Below, we will use a bilinear form defined using the mass matrix.

\section{The construction of the multiscale basis functions}
\label{sec:ms}

In this section, we will present the construction of the auxiliary multiscale basis functions. As we mentioned before, we will use the concept of GMsFEM to construct our auxiliary multiscale basis functions, which will be constructed for each coarse block $K$ in the coarse grid.
Let $K_i$ be the $i$-th coarse block and let $V(K_i)$ be the restriction of $V$ on $K_i$.
Recall that for (\ref{eq:off1}), we need
a local spectral problem, which is to find a real number $\lambda^{(i)}_j$ and a function $\phi_j^{(i)} \in V(K_i)$ such that
\begin{equation}\label{spectralProblem_GMsFEM}
a_{i}(\phi^{(i)}_j, w) = \lambda^{(i)}_j s_{i}(\phi^{(i)}_j, w), \qquad \forall w \in V(K_i),
\end{equation}
%where $V_h(K_i)$ is the fine-scale piecewise bilinear functions supported in $K_i$,
where $a_{i}$ is a symmetric non-negative definite bilinear operator
 and $s_{i}$ is a symmetric positive definite bilinear operators defined on $V(K_i) \times V(K_i)$.
 We remark that the above problem is solved on the fine mesh in the actual computations.
Based on our analysis, we can choose
\begin{equation*}
\label{eq:inner}
\begin{aligned}
%a_{\omega_i}(v, w) = \int_{\omega_i} \kappa \nabla (\chi_i^{ms} v) \cdot \nabla (\chi_i^{ms} w), \
a_{i}(v, w) = \int_{K_i} \kappa \nabla v \cdot \nabla w, \
s_{i}(v, w) = \int_{K_i} \widetilde{\kappa}  v   w,
\end{aligned}
\end{equation*}
where $\widetilde{\kappa} =  \sum_{j=1}^{N_c}  \kappa|\nabla \chi_j^{ms}|^2 $
and $\{ \chi_j^{ms} \}_{j=1}^{Nc}$ are the standard multiscale finite element (MsFEM) basis functions
(see \cite{hw97}), which satisfy the partition of unity property.
We let $\lambda_j^{(i)}$ be the eigenvalues
of (\ref{spectralProblem_GMsFEM}) arranged in ascending order.
We will use the first $l_i$ eigenfunctions to construct our
local auxiliary multiscale space $V_{aux}^{(i)}$, where
$V_{aux}^{(i)}=\text{span}\{\phi_j^{(i)}|\;j\leq l_i\}$.
The global auxiliary multiscale space $V_{aux}$ is the sum of these local auxiliary multiscale space, namely $V_{aux}=\oplus_{i=1}^N V^{(i)}_{aux}$.
This space is used to construct multiscale basis functions that are
$\phi$-orthogonal to the auxiliary space as defined above.
The notion of $\phi$-orthogonality will be defined next.

For the local auxiliary multiscale space $V_{aux}^{(i)}$, the bilinear form $s_i$ in (\ref{eq:inner})
defines an inner product
with norm $\|v\|_{s(K_i)} = s_i(v,v)^{\frac{1}{2}}$. These local inner products and norms provide a natural
definitions of inner product and norm for the global auxiliary multiscale space $V_{aux}$, which are defined by
\begin{equation*}
s(v,w) = \sum_{i=1}^{N} s_i(v,w), \quad
\|v\|_s = s(v,v)^{\frac{1}{2}}, \quad \forall v\in V_{aux}.
\end{equation*}
We note that $s(v,w)$ and $\|v\|_s$ are also an inner product and norm for the space $V$.
Using the above inner product, we can define the notion of $\phi$-orthogonality in the space $V$.
Given a function $\phi_j^{(i)} \in V_{aux}$, we say that a function $\psi \in V$ is $\phi_j^{(i)}$-orthogonal if
\begin{equation*}
s(\psi, \phi_j^{(i)}) = 1, \quad s(\psi, \phi_{j'}^{(i')}) = 0, \; \text{if }  j' \ne j \text{ or } i' \ne i.
\end{equation*}
Now, we let $\pi_i:V\rightarrow V_{aux}^{(i)}$ be the projection with respect to the inner product $s_i(v,w)$. So, the operator $\pi_i$ is given by
%define the projection operator $\pi:V_{h}\rightarrow V_{aux}$ by
\[
\pi_i(u)= \sum_{j=1}^{l_i} \cfrac{s_{i}(u,\phi_{j}^{(i)})}{s_{i}(\phi_{j}^{(i)},\phi_{j}^{(i)})}\phi_{j}^{(i)},   \quad\;\forall u\in V.
\]
In addition, we let $\pi:V\rightarrow V_{aux}$ be the projection with respect to the inner product $s(v,w)$. So, the operator $\pi$ is given by
%define the projection operator $\pi:V_{h}\rightarrow V_{aux}$ by
\[
\pi(u)=\sum_{i=1}^{N} \sum_{j=1}^{l_i} \cfrac{s_{i}(u,\phi_{j}^{(i)})}{s_{i}(\phi_{j}^{(i)},\phi_{j}^{(i)})}\phi_{j}^{(i)},  \quad \;\forall u\in V.
\]
Note that $\pi = \sum_{i=1}^N \pi_i$.

We next present the construction of our multiscale basis functions. For each coarse element $K_i$, we define an oversampled domain $K_{i,m} \subset \Omega$ by enlarging $K_i$ by $m$ coarse grid layers, where $m \geq 1$ is an integer. We next define the multiscale
 basis function $\psi_{j,ms}^{(i)}\in V_{0}(K_{i,m})$ by
 \begin{equation}
 \label{eq:ms-min}
 \psi_{j,ms}^{(i)}=\text{argmin}\Big\{ a(\psi,\psi) \; | \; \psi \in V_{0}(K_{i,m}), \quad \psi \text{ is } \phi_j^{(i)}\text{-orthogonal} \Big\}
 \end{equation}
 where $V(K_{i,m})$ is the restriction of $V$ in $K_{i,m}$, and $V_{0}(K_{i,m})$ is the subspace of $V(K_{i,m})$ with zero trace on $\partial K_{i,m}$.
%\[
%\psi_{j,ms}^{(i)}=\text{argmin}_{\psi \in X_{ij}\cap V_{h}(K_{i}^{+})} a(\psi,\psi),
%\]
Our multiscale finite element space $V_{ms}$ is defined by
\[
V_{ms}=\text{span}\Big\{\psi_{j,ms}^{(i)} \; | \; 1 \leq j \leq l_i, \; 1 \leq i \leq N \Big\}.
\]
The existence of the solution of the above minimization problem will be proved in Lemma \ref{lem:infsup},
where a more general version is considered.
We remark that the linear independence of the above basis functions $ \psi_{j,ms}^{(i)}$ is obvious.
%Here, $X_{ij}$ contains $\psi$'s such that they are
%$\phi^{(i)}_j$-orthogonal to the auxiliary space as defined below
%\[
%s_{i}(\psi,\phi_{k}^{(l)})=\delta_{il}\delta_{jk}\;\forall i,j.
%\]

%{\bf (add a remark about linear independence)}

The following are the main ideas behind this multiscale basis function construction.
\begin{itemize}

\item The   $\phi$-orthogonality of the multiscale basis functions allows
a spatial decay, which is one of the contributing factors
of a mesh-size dependent convergence.

\item The multiscale basis functions minimize the energy, which
is important and, in particularly, for the decay.

\item We note if we do not choose an appropriate auxiliary space, this
will affect the convergence rate and the decay rate in terms of the contrast.

\end{itemize}

In Figure~\ref{fig:basis}, we illustrate the importance of the auxiliary space
on the decay of multiscale basis function.
We consider a high-contrast channelized medium as shown in the left plot in Figure~\ref{fig:basis}.
In the middle plot of Figure~\ref{fig:basis}, we show a multiscale basis
with the use of only one eigenfunction in the auxiliary space. We see that the basis function has almost no decay.
On the other hand, in the right plot of Figure~\ref{fig:basis}, we show a multiscale basis
with the use of $4$ eigenfunctions, and we see clearly that the basis function has very fast decay outside the coarse block.

\begin{figure}[ht!]
\centering
\includegraphics[scale=0.3]{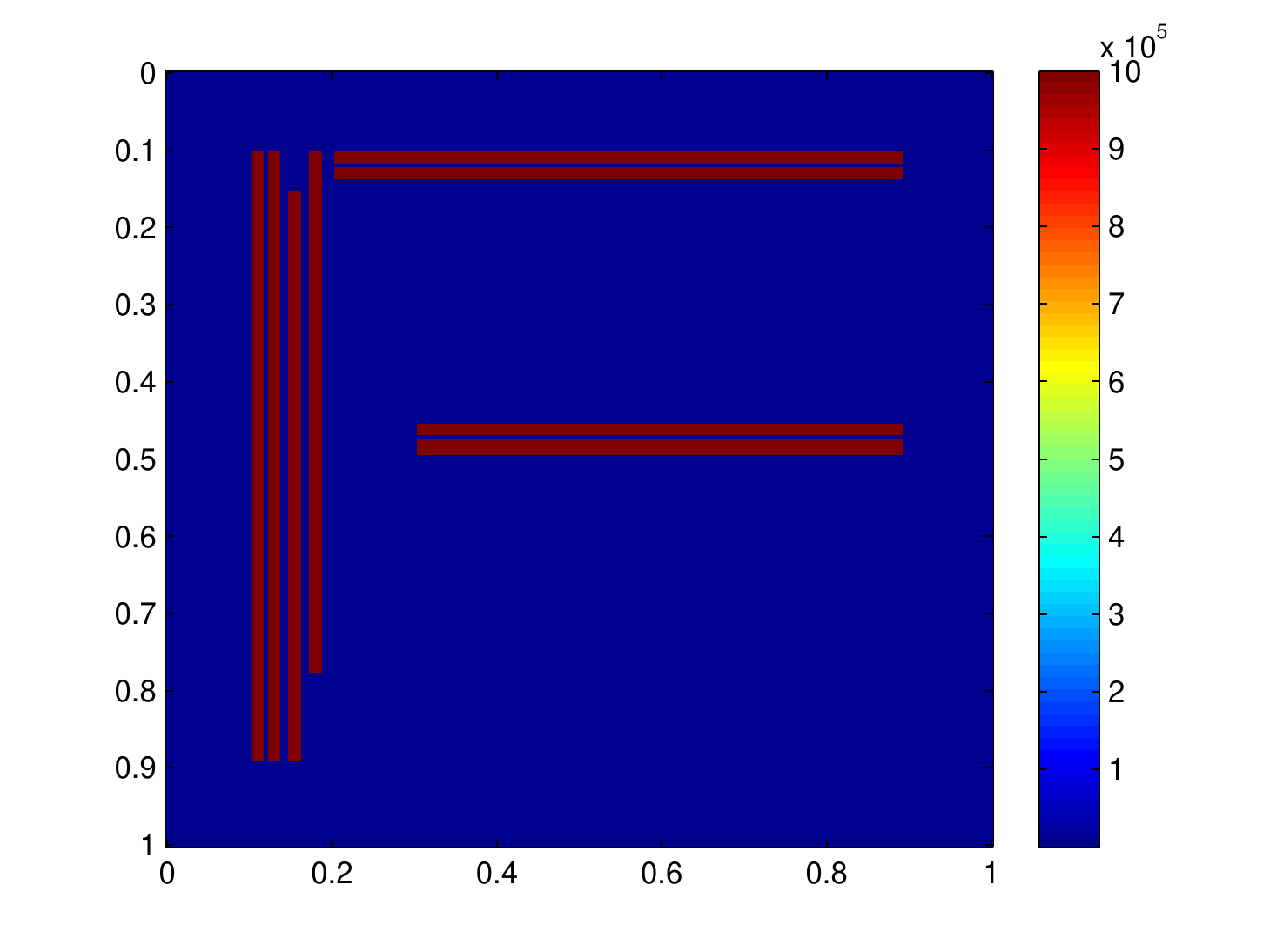}
\includegraphics[scale=0.3]{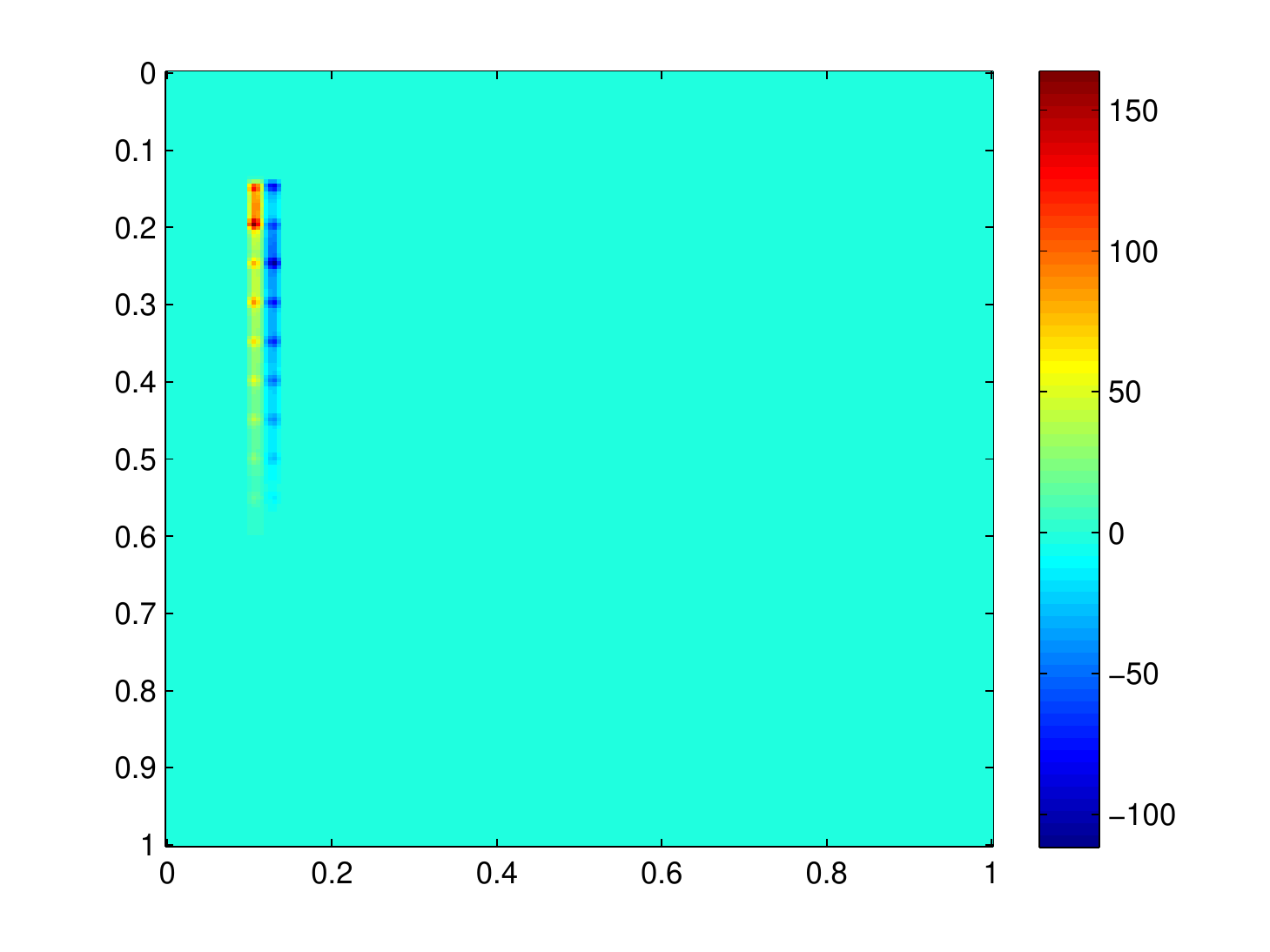}
\includegraphics[scale=0.3]{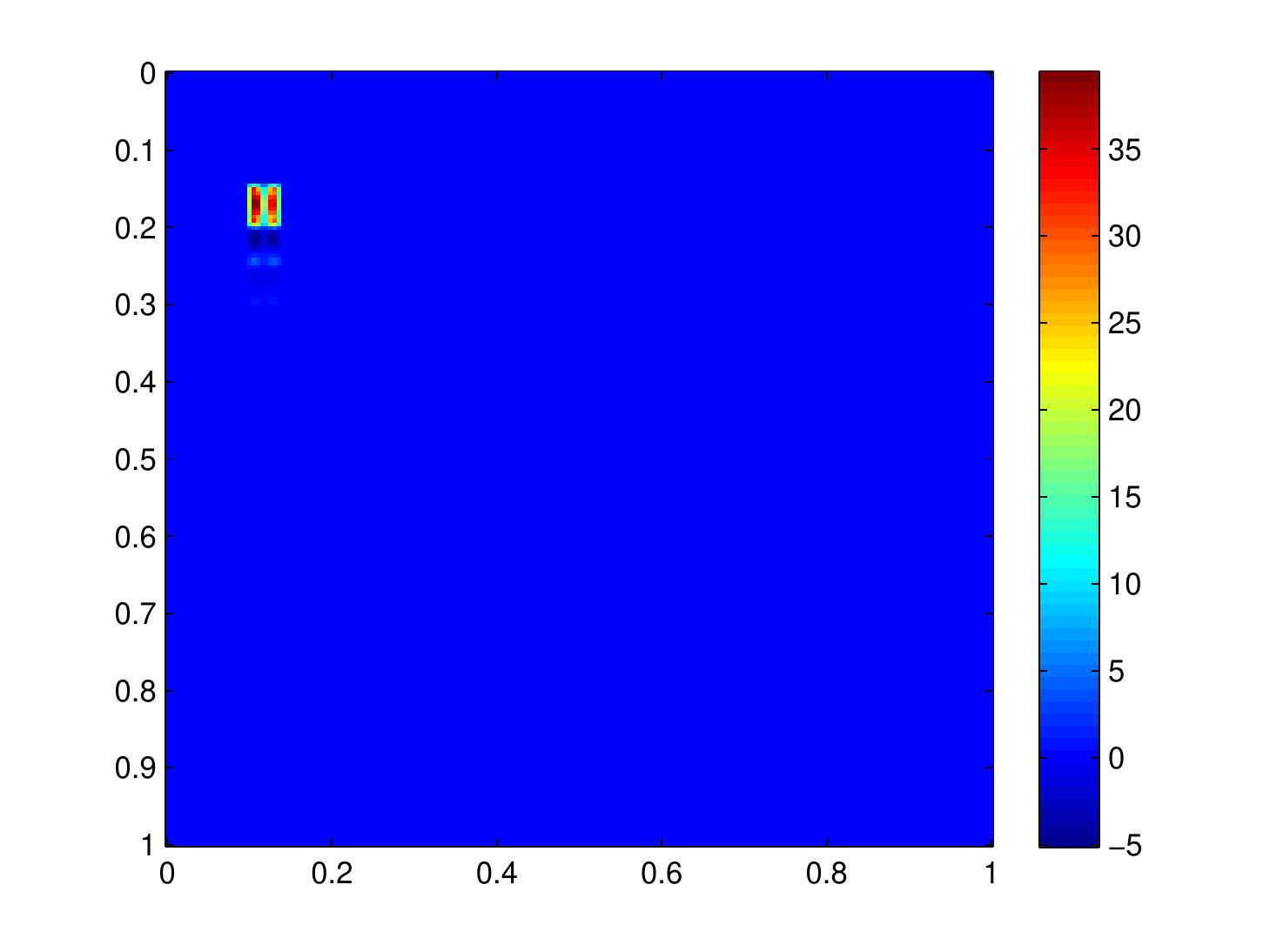}
\caption{An illustration of the decay property of multiscale basis functions. Left: a high contrast medium. Middle: a multiscale basis function
using one eigenfunction in each local auxiliary space. Right: a multiscale basis function using $4$ eigenfunctions in each local auxiliary space.}
\label{fig:basis}
\end{figure}
\begin{remark}
The local multiscale basis construction is motivated by the global
basis construction as defined below. The global basis functions
are used in the convergence analysis.
We will present the construction of the global basis functions.
The global multiscale
 basis function $\psi_{j}^{(i)}\in V$ is defined by
 \begin{equation}
 \label{eq:glo-min}
 \psi_{j}^{(i)}=\text{argmin}\Big\{ a(\psi,\psi) \; | \; \psi \in V, \quad \psi \text{ is } \phi_j^{(i)}\text{-orthogonal} \Big\}.
 \end{equation}
% where $V_h(K_{i,m})$ is the restriction of $V_h$ in $K_{i,m}$.
%\[
%\psi_{j,ms}^{(i)}=\text{argmin}_{\psi \in X_{ij}\cap V_{h}(K_{i}^{+})} a(\psi,\psi),
%\]
Our multiscale finite element space $V_{glo}$ is defined by
\[
V_{glo}=\text{span}\Big\{\psi_{j}^{(i)} \; | \; 1 \leq j \leq l_i, \; 1 \leq i \leq N \Big\}.
\]
This global multiscale finite element space $V_{glo}$ satisfies a very important orthogonality property,
which will be used in our convergence analysis.
In particular, we define $\tilde{V}_{h}$ as the null space of the projection $\pi$, namely,
$\tilde{V}=\{v\in V \; | \; \pi(v)=0\}$. Then for any $\psi_{j}^{(i)} \in V_{glo}$, we have
\begin{equation*}
a(\psi_j^{(i)},v) = 0, \quad \forall v\in \tilde{V}.
\end{equation*}
Thus, $\tilde{V}\subset V_{glo}^{\bot}$, where $V_{glo}^{\bot}$ is the orthogonal complement of $V_{glo}$ with respect to
the inner product defined using the bilinear form $a$.
Since $\text{dim}(V_{glo})=\text{dim}(V_{aux})$,
we have $\tilde{V}=V_{glo}^{\bot}$. Thus, we have $V=V_{glo}\oplus\tilde{V}$.
In Figure~\ref{fig:glo_basis}, we illustrate the decay of the basis function.

%Therefore, for $v\in\tilde{V}_{h}=\{v\in V_{h}|\;\pi(v)=0\},$ $a(\psi_{k}^{(l)},v)=0$
%and that is, $\tilde{V}_{h}\subset V_{glo}^{\bot}$ and since $\text{dim}(V_{glo})=\text{dim}(V_{ms})$,
%we have $\tilde{V}_{h}=V_{glo}^{\bot}$. Thus, we have $V_{h}=V_{glo}\oplus\tilde{V}_{h}$.

\begin{figure}[ht!]
\centering
\includegraphics[scale=0.3]{medium2}
\includegraphics[scale=0.3]{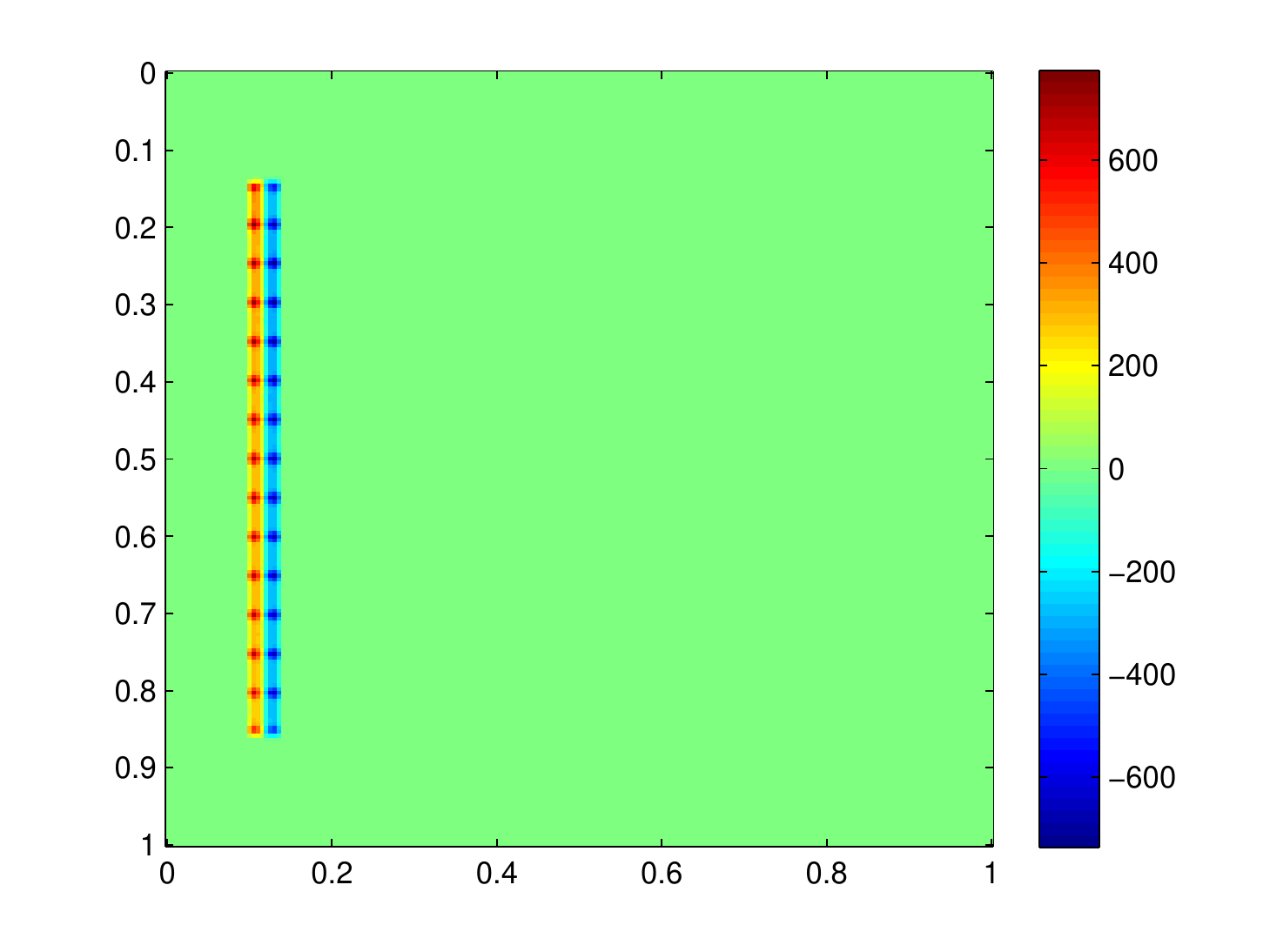}
\includegraphics[scale=0.3]{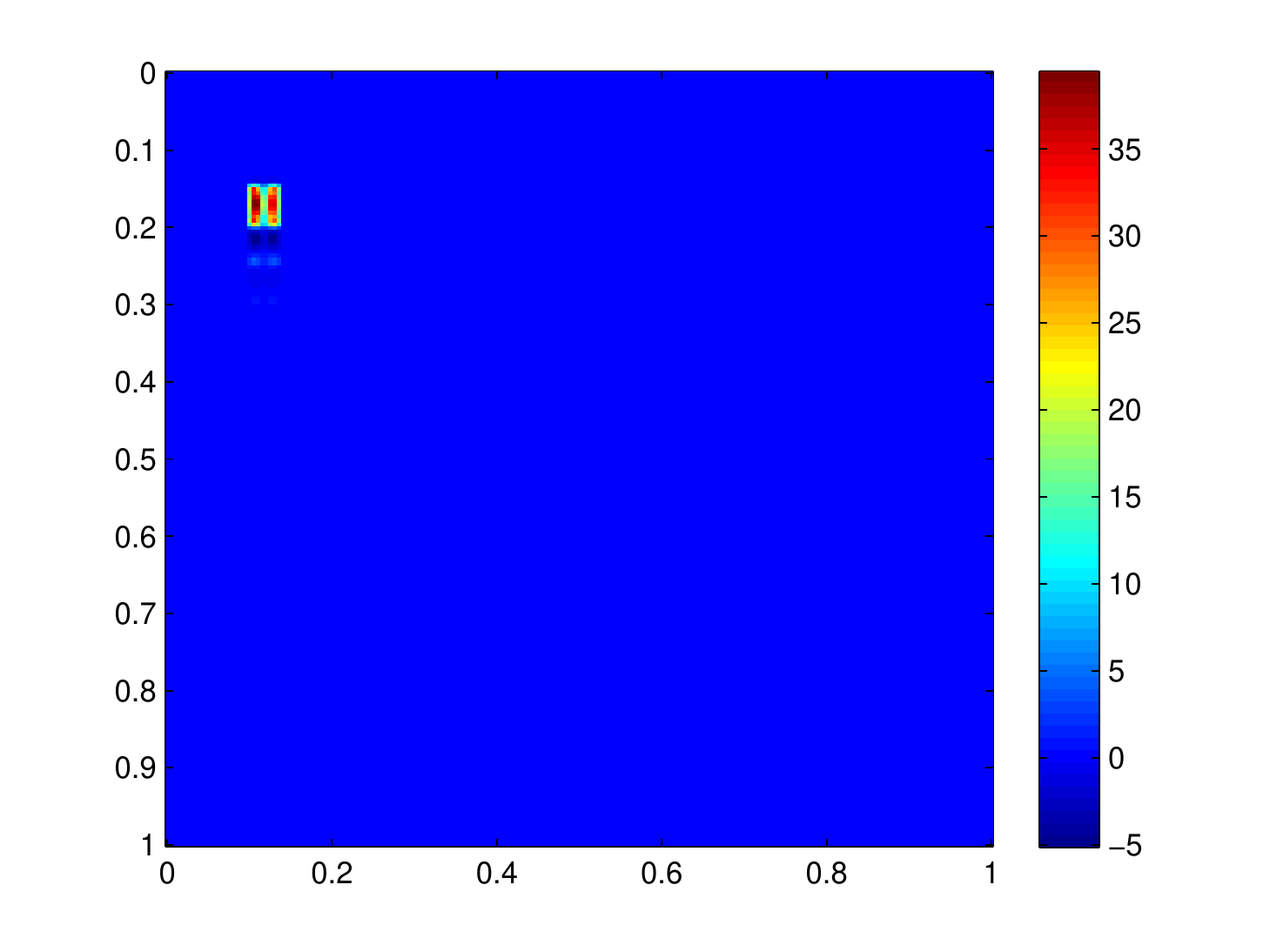}
\caption{An illustration of the decay property of global multiscale basis functions. Left: a high contrast medium. Middle: a multiscale basis function
using one eigenfunction in each local auxiliary space. Right: a multiscale basis function using $4$ eigenfunctions in each local auxiliary space.}
\label{fig:glo_basis}
\end{figure}
\end{remark}

\section{Analysis}
\label{sec:anal}

In this section, we will prove the convergence of our proposed method.
Before proving the convergence of the method, we need to define some notations. We will define two different norms for the finite element space $V$.
One is the $a$-norm $\|\cdot\|_a$ where $\|u\|_a^2=\int_\Omega \kappa | \nabla u|^2$. The other is $s$-norm $\|\cdot\|_s$ where $\|u\|_s^2=\int_\Omega \tilde{\kappa} u^2$.
For a given subdomain $\Omega_i \subset \Omega$, we will define the local $a$-norm and $s$-norm by $\|u\|_{a(\Omega_i)}^2=\int_{\Omega_i} \kappa | \nabla u|^2$ and $\|u\|_{s(\Omega_i)}^2=\int_{\Omega_i} \tilde{\kappa}  u^2$.

To prove the convergence result of the proposed method, we will first show the convergence result of using the global multiscale basis functions. Next, we will give an estimate of the difference between the global basis functions and the multiscale basis functions to show the convergent rate of the proposed method is similar to using global basis functions.
The approximate solution $u_{glo} \in V_{glo}$ obtained in the global multiscale space $V_{glo}$ is defined
by
\begin{equation}
\label{eq:glosol}
a(u_{glo},v) = \int_{\Omega} fv \text{ for all } v\in V_{glo}.
\end{equation}
The convergence analysis will start with the following lemma.

\begin{lemma}
\label{lem:glosol}
Let $u$ be the solution in (\ref{eq:finesol}) and $u_{glo}$ be
the solution of (\ref{eq:glosol}). We have $u-u_{glo}\in\tilde{V}$
and
\[
\|u-u_{glo}\|_{a}\leq \Lambda^{-\frac{1}{2}}\| \tilde{\kappa}^{-\frac{1}{2}} f\|_{L^{2}(\Omega)}
\]
where
\begin{equation*}
\Lambda = \min_{1\leq i \leq N} \lambda^{(i)}_{l_i+1}.
\end{equation*}
Moreover, if we replace the multiscale partition of unity $\{\chi_{j}^{ms}\}$ by
the bilinear partition of unity, we have
\[
\|u-u_{glo}\|_{a}\leq C  H\Lambda^{-\frac{1}{2}} \| \kappa^{-\frac{1}{2}} f\|_{L^{2}(\Omega)}.
\]
\label{lem:global_estimate}
\end{lemma}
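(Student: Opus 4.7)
The plan rests on three ingredients already available in the excerpt: Galerkin orthogonality of $u_{glo}$, the $a$-orthogonal splitting $V = V_{glo}\oplus\tilde V$ recorded in the preceding remark, and the local spectral problem (\ref{spectralProblem_GMsFEM}) defining the auxiliary eigenbasis.

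First, I would establish $e := u - u_{glo}\in \tilde V$. Galerkin orthogonality reads $a(e,v)=0$ for every $v\in V_{glo}$, so $e$ lies in the $a$-orthogonal complement of $V_{glo}$, which the remark identifies with $\tilde V$. In particular $\pi(e)=0$, so on each coarse block $K_i$ we have $s_i(e,\phi_j^{(i)})=0$ for every $j\leq l_i$. Expanding $e|_{K_i}$ in the $s_i$-orthogonal eigenbasis $\{\phi_j^{(i)}\}$ therefore uses only indices $j>l_i$, and the Rayleigh relation $a_i(\phi_j^{(i)},\phi_j^{(i)})=\lambda_j^{(i)} s_i(\phi_j^{(i)},\phi_j^{(i)})$ yields
\[
\lambda_{l_i+1}^{(i)}\,\|e\|_{s(K_i)}^2 \leq \|e\|_{a(K_i)}^2.
\]
Summing over $i$ and invoking the definition of $\Lambda$ produces $\|e\|_s^2 \leq \Lambda^{-1}\|e\|_a^2$.

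Next, I would close the energy estimate by duality. Since $u_{glo}\in V_{glo}$ is $a$-orthogonal to $e\in\tilde V$, it follows that $a(u_{glo},e)=0$, hence $\|e\|_a^2 = a(u,e) = \int_\Omega f e$ by the weak form (\ref{eq:finesol}). Writing $fe=(\tilde{\kappa}^{-1/2}f)(\tilde{\kappa}^{1/2}e)$ and applying Cauchy--Schwarz gives $\int_\Omega fe \leq \|\tilde{\kappa}^{-1/2}f\|_{L^2(\Omega)}\,\|e\|_s$, and combining with the spectral bound above and dividing by $\|e\|_a$ yields the first estimate.

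For the improved bound under the bilinear partition of unity, I would verify by a direct computation on the reference coarse element that $\sum_j |\nabla \chi_j^{bi}|^2 \geq c H^{-2}$ pointwise, so the redefined weight satisfies $\tilde{\kappa} \geq c H^{-2}\kappa$. This upgrades the weight change to $\|\tilde{\kappa}^{-1/2}f\|_{L^2(\Omega)} \leq CH\|\kappa^{-1/2}f\|_{L^2(\Omega)}$, which substitutes into the first estimate to give the $O(H\Lambda^{-1/2})$ bound. The only delicate point in this plan is the spectral decomposition of $e|_{K_i}$: it requires $s_i$ to be a genuine inner product on $V(K_i)$, which holds since $s_i$ is symmetric positive definite by construction, so the generalized eigenvectors form a complete $s_i$-orthogonal basis and the expansion is legitimate; everything else is a routine assembly of the three ingredients.
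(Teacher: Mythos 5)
Your proof is correct and follows essentially the same route as the paper's: Galerkin orthogonality gives $u-u_{glo}\in V_{glo}^{\perp}=\tilde{V}$, hence $\pi_i(u-u_{glo})=0$ on each block, the local spectral problem then gives $\|u-u_{glo}\|_s^2\leq \Lambda^{-1}\|u-u_{glo}\|_a^2$, and weighted Cauchy--Schwarz on $\int_\Omega f(u-u_{glo})$ closes the estimate. One small remark: your treatment of the bilinear partition-of-unity case is actually more precise than the paper's one-line justification, since you correctly identify that the step $\|\tilde{\kappa}^{-1/2}f\|_{L^2(\Omega)}\leq CH\|\kappa^{-1/2}f\|_{L^2(\Omega)}$ requires the pointwise \emph{lower} bound $\sum_j|\nabla\chi_j|^2\geq cH^{-2}$ (so that $\tilde{\kappa}\geq cH^{-2}\kappa$), whereas the paper cites only the upper bound $|\nabla\chi_j|=O(H^{-1})$.
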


\begin{proof}
%First, since $u_h$ is the fine-scale reference solution and $u_{glo}$ be
%the solution in $V_{glo}$, $u_h$ and $u_{glo}$ satisfy the following equations.
By the definitions of $u$ and $u_{glo}$, we have
\begin{align*}
a(u,v) & =(f,v), \;\forall v\in V, \label{fine} \\
a(u_{glo},v) & =(f,v), \;\forall v\in V_{glo}.
\end{align*}
Combining these two equations, we obtain
\[
a(u-u_{glo},v)=0,\;\forall v\in V_{glo}.
\]
So, we have $u-u_{glo}\in V_{glo}^{\bot}=\tilde{V}$.
Using this orthogonality property and (\ref{eq:finesol}),
we have
\begin{align*}
a(u-u_{glo},u-u_{glo}) & =a(u,u-u_{glo})
  =(f,u-u_{glo})\\
 & \leq \| \tilde{\kappa}^{-\frac{1}{2}} f \|_{L^2(\Omega)} \, \| u-u_{glo}\|_s.
\end{align*}
Since $u-u_{glo}\in\tilde{V}_h$, we have $\pi(u-u_{glo})=0$.
By the fact that the coarse blocks $K_i$ are disjoint, we also have $\pi_i(u-u_{glo})=0$ for all $i=1,2,\cdots, N$.
%and $P_{i}(u_{h}-u_{glo})=0$ where
%$P_{i}$ is the eigen-projection operator.
Therefore, we have
\begin{align*}
\|u - u_{glo}\|_s^2 = \sum_{i=1}^N\|(u-u_{glo})\|_{s(K_{i})}^{2} & = \sum_{i=1}^N\|(I-\pi_{i})(u-u_{glo})\|_{s(K_{i})}^{2}.
\end{align*}
By using the orthogonality of the eigenfunctions $\phi_j^{(i)}$ of (\ref{spectralProblem_GMsFEM}), we have
\begin{align*}
\sum_{i=1}^N\|(I-\pi_{i})(u-u_{glo})\|_{s(K_{i})}^{2} & \leq
\cfrac{1}{\Lambda}\sum_{i=1}^N\|u-u_{glo}\|_{a(K_{i})}^{2} = \cfrac{1}{\Lambda} \| u - u_{glo}\|_a^2.
\end{align*}
The proof for the second part follows from the fact that $|\nabla \chi_j| = O(H^{-1})$ when $\{\chi_j\}$
is the set of bilinear partition of unity functions.
\end{proof}

We remark that, by using the fact that
\begin{align*}
a(u-u_{glo},u-u_{glo})
  =(f,u-u_{glo}) = (f-\pi f, u-u_{glo}),
\end{align*}
we can, under sufficient regularity assumption on $f$,
improve the above result to
\[
\|u-u_{glo}\|_{a}\leq C  H\Lambda^{-\frac{1}{2}} \| \kappa^{-\frac{1}{2}} (f-\pi f) \|_{L^{2}(\Omega)}.
\]

After proving the above lemma, we have the convergence of the method for using global basis functions. Next, we are going to prove these global basis functions are localizable.
The estimate of the difference between the global basis functions and the multiscale basis functions will require the following lemma.
For each coarse block $K$, we define $B$ to be a bubble function with $B(x)>0$ for all $x\in\text{int}(K)$
and $B(x)=0$ for all $x\in\partial K$.
We will take $B = \Pi_j \chi_j^{ms}$ where the product is taken over all vertices $j$ on the boundary of $K$.
Using the bubble function, we define the constant
\begin{equation*}
C_{\pi}=\sup_{K\in\mathcal{T}^{H},\mu\in V_{aux}}\cfrac{\int_{K}\tilde{\kappa}  \mu^{2}}{\int_{K}B\tilde{\kappa} \mu^{2}}.
\end{equation*}
%where $P$ is the standard finite element projection operator which maps continuous functions on an element $K$
%to a piecewise bilinear function on $K$.
We also define
\begin{equation*}
\lambda_{max} = \max_{1\leq i \leq N} \max_{1\leq j\leq l_i} \lambda_j^{(i)}.
\end{equation*}
The following lemma considers the following minimization problem defined on a coarse block $K_i$:
 \begin{equation}
 \label{eq:min}
v=\text{argmin}\Big\{ a(\psi,\psi) \; | \; \psi \in V_0(K_i), \quad s_i(\psi,v_{aux}) = 1, \quad s_i(\psi,w) = 0, \;\forall w\in v_{aux}^{\perp} \Big\}
 \end{equation}
 for a given $v_{aux} \in V_{aux}^{(i)}$ with $\|v_{aux}\|_{s(K_i)}=1$,
 where $v_{aux}^{\perp} \subset V_{aux}^{(i)}$ is the orthogonal complement of $\text{span}\{ v_{aux}\}$
 with respect to the inner product $s_i$.
We note that the minimization problem (\ref{eq:min}) is a more general version of (\ref{eq:ms-min}).

\begin{lemma}
\label{lem:infsup}
For all $v_{aux}\in V_{aux}$
there exists a function $v\in V$ such that
\[
\pi(v)=v_{aux},\qquad\|v\|_{a}^{2}\leq D \, \|v_{aux}\|_{s}^{2},\qquad\text{supp}(v)\subset\text{supp}(v_{aux}).
\]
We write $D = C_{\mathcal{T}}C_{\pi}(1+\lambda_{max})$, where
$C_{\mathcal{T}}$ is the square of the maximum number of vertices over all coarse elements.
%where $C_{\Pi}=\sup_{K\in\mathcal{T}^{H},\mu\in V_{ms}}\cfrac{\int_{K}\kappa(\sum_{i}|\nabla\chi_{i}|^{2})\mu^{2}}{\int_{K}\kappa B(\sum_{i}|\nabla\chi_{i}|^{2})\mu^{2}}$
%with $B|_{K}=\Pi_{\omega_{i}\supset K}(\chi_{i})$. We remark that
%$B$ is a bubble function with $B(x)>0$ for all $x\in\text{int}(K)$
%and $B(x)=0$ for all $x\in\partial K$ .
\label{lem:projection}
\end{lemma}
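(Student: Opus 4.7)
The plan is to reduce the global statement to a block-local construction by exploiting the direct-sum structure $V_{aux} = \oplus_{i=1}^{N} V_{aux}^{(i)}$ and the disjointness of the coarse blocks $\{K_{i}\}$. Writing $v_{aux} = \sum_{i} v_{aux}^{(i)}$ with $v_{aux}^{(i)} \in V_{aux}^{(i)}$, I would construct, for each $i$ with $v_{aux}^{(i)} \neq 0$, a local function $v^{(i)} \in V_{0}(K_{i})$ satisfying $\pi_{i}(v^{(i)}) = v_{aux}^{(i)}$ together with the sharp local bound $\|v^{(i)}\|_{a(K_{i})}^{2} \leq D\,\|v_{aux}^{(i)}\|_{s(K_{i})}^{2}$. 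Setting $v = \sum_{i} v^{(i)}$ then yields $\pi(v) = v_{aux}$ (since $\pi_{j}(v^{(i)}) = 0$ when $j \neq i$), the required support property, and by block additivity $\|v\|_{a}^{2} = \sum_{i}\|v^{(i)}\|_{a(K_{i})}^{2} \leq D\,\|v_{aux}\|_{s}^{2}$.

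For the local step, I would take $v^{(i)}$ to be the solution of the constrained minimization $\text{argmin}\{\|w\|_{a(K_{i})}^{2} : w \in V_{0}(K_{i}),\ \pi_{i}(w) = v_{aux}^{(i)}\}$, so that existence and the energy bound reduce to a Brezzi-type analysis of the associated saddle-point system. The Lagrange formulation gives $a_{i}(v^{(i)}, w) = s_{i}(w, \eta)$ for all $w \in V_{0}(K_{i})$, with multiplier $\eta \in V_{aux}^{(i)}$. Testing against $w = v^{(i)}$ produces $\|v^{(i)}\|_{a(K_{i})}^{2} = s_{i}(v_{aux}^{(i)}, \eta) \leq \|v_{aux}^{(i)}\|_{s(K_{i})}\|\eta\|_{s(K_{i})}$, so the entire problem reduces to bounding $\|\eta\|_{s(K_{i})}$ in terms of $\|v^{(i)}\|_{a(K_{i})}$.

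That bound is obtained by testing against the distinguished choice $w = B\eta$, where $B = \prod_{j} \chi_{j}^{ms}$ is the coarse-block bubble (product taken over the vertices of $K_{i}$), which lies in $V_{0}(K_{i})$ since $B$ vanishes on $\partial K_{i}$. The coercivity input is $s_{i}(B\eta, \eta) = \int_{K_{i}} B\,\widetilde{\kappa}\,\eta^{2} \geq C_{\pi}^{-1}\|\eta\|_{s(K_{i})}^{2}$, directly from the definition of $C_{\pi}$. The continuity input is $a_{i}(v^{(i)}, B\eta) \leq \|v^{(i)}\|_{a(K_{i})}\,\|B\eta\|_{a(K_{i})}$. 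Combining these via the Lagrange equation $a_{i}(v^{(i)}, B\eta) = s_{i}(B\eta, \eta)$ gives $C_{\pi}^{-1}\|\eta\|_{s(K_{i})}^{2} \leq \|v^{(i)}\|_{a(K_{i})}\|B\eta\|_{a(K_{i})}$, so the whole argument is now reduced to bounding $\|B\eta\|_{a(K_{i})}$ in terms of $\|\eta\|_{s(K_{i})}$.

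Controlling $\|B\eta\|_{a(K_{i})}$ is the main technical obstacle, and is where the factors $C_{\mathcal{T}}$ and $\lambda_{\max}$ enter. From $\nabla(B\eta) = B\nabla\eta + \eta\nabla B$ and $0 \leq B \leq 1$, the energy splits into two pieces: the first is bounded by $\|\eta\|_{a(K_{i})}^{2} \leq \lambda_{\max}\|\eta\|_{s(K_{i})}^{2}$, because $\eta \in V_{aux}^{(i)}$ is a linear combination of eigenfunctions $\{\phi_{j}^{(i)}\}_{j \leq l_{i}}$ with eigenvalues at most $\lambda_{\max}$; the second piece requires the pointwise inequality $\kappa|\nabla B|^{2} \leq C_{\mathcal{T}}\,\widetilde{\kappa}$, which follows from applying Leibniz to $B = \prod_{j}\chi_{j}^{ms}$, the bounds $0 \leq \chi_{j}^{ms} \leq 1$, Cauchy-Schwarz over the vertex sum, and the defining identity $\widetilde{\kappa} = \sum_{j}\kappa|\nabla\chi_{j}^{ms}|^{2}$. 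Together these give $\|B\eta\|_{a(K_{i})}^{2} \leq C_{\mathcal{T}}(1 + \lambda_{\max})\|\eta\|_{s(K_{i})}^{2}$, and unwinding the chain of inequalities produces $\|v^{(i)}\|_{a(K_{i})}^{2} \leq C_{\mathcal{T}} C_{\pi}(1 + \lambda_{\max})\|v_{aux}^{(i)}\|_{s(K_{i})}^{2}$, which assembles block-wise into the advertised global bound with $D = C_{\mathcal{T}} C_{\pi}(1 + \lambda_{\max})$.
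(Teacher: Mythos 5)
Your proposal is correct and is essentially the paper's own argument: the paper likewise reduces to the block-local constrained minimization (\ref{eq:min}), passes to the Lagrange/saddle-point system (\ref{eq:loc_problem})--(\ref{eq:loc_problem2}), and extracts the stability bound from precisely your two ingredients --- the bubble coercivity $s_i(B\mu,\mu)\ge C_{\pi}^{-1}\|\mu\|_{s(K_i)}^{2}$ and the estimate $\|B\mu\|_{a(K_i)}^{2}\lesssim C_{\mathcal{T}}(1+\lambda_{max})\|\mu\|_{s(K_i)}^{2}$ coming from $|\nabla B|^{2}\le C_{\mathcal{T}}\sum_{j}|\nabla\chi_{j}^{ms}|^{2}$ and the spectral problem (\ref{spectralProblem_GMsFEM}) --- so your testing of the Euler--Lagrange equation with $B\eta$ is just an explicit unwinding of the paper's inf-sup well-posedness step. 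The only blemish is bookkeeping: chaining $\|v^{(i)}\|_{a(K_i)}^{2}\le\|v_{aux}^{(i)}\|_{s(K_i)}\|\eta\|_{s(K_i)}$ with $C_{\pi}^{-1}\|\eta\|_{s(K_i)}^{2}\le\|v^{(i)}\|_{a(K_i)}\|B\eta\|_{a(K_i)}$ actually yields $D=C_{\mathcal{T}}C_{\pi}^{2}(1+\lambda_{max})$, an extra factor of $C_{\pi}$ beyond what your last sentence (and the lemma) states; the paper's own Step 1 constants are no tighter, and since only the finiteness of $D$ and $D\ge 1$ are used downstream, this slack is harmless.
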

\begin{proof}
The proof consists of two steps. In the first step,
we will show that the problem (\ref{eq:min}) has a unique solution.
In the second step, we will prove the desired result of the lemma.

\noindent
{\bf Step 1}:

Let $v_{aux}\in V_{aux}^{(i)}$. %Define $V_{h,0}(K_i)$ as the subspace of $V_h(K_i)$ with zero value on $\partial K_i$.
The minimization problem (\ref{eq:min}) is equivalent
to the following variational problem: find $v\in V_{0}(K_i)$ and $\mu \in V_{aux}^{(i)}$ such that
\begin{align}
a_i(v,w) + s_i(w,\mu) &=0, \quad \forall w\in V_{0}(K_i), \label{eq:loc_problem} \\
s_i(v,\nu) &=s_i(v_{aux},\nu), \quad \forall \nu \in V_{aux}^{(i)}. \label{eq:loc_problem2}
\end{align}
Note that, the well-posedness of the minimization problem (\ref{eq:min}) is equivalent to the existence of
a function $v\in V_{0}(K_i)$ such that
\begin{equation*}
s_i(v,v_{aux}) \geq C \|v_{aux}\|_{s(K_i)}^2, \quad
\|v\|_{a(K_i)} \leq C \|v_{aux}\|_{s(K_i)}
\end{equation*}
where $C$ is a constant to be determined.

Note that $v_{aux}$ is supported in $K_i$. We let $v = Bv_{aux}$.
By the definition of $s_i$, we have
\begin{equation*}
s_i(v, v_{aux}) = \int_{K_i} \tilde{\kappa} B v_{aux}^2 \geq C^{-1}_{\pi} \| v_{aux}\|_{s(K_i)}^2.
\end{equation*}
%By the definition of norm $\|\cdot\|_{a(K_i)}$, we have
%\begin{equation*}
%\| v\|_{a(K_i)}^2 = \| Bv_{aux}\|_{a(K_i)}^2.
%\end{equation*}
%where $C_p$ is the stability constant for the projection $P$.
Since $\nabla (Bv_{aux}) = v_{aux}\nabla B + B \nabla v_{aux}$, $|B| \leq 1$ and $|\nabla B|^{2}\leq C_{\mathcal{T}}\sum_{j}|\nabla\chi_{j}^{ms}|^{2}$,
we have
\begin{equation*}
\| v\|_{a(K_i)}^2=\| Bv_{aux}\|_{a(K_i)}^2 \leq C_{\mathcal{T}}  C_{\pi} \|v\|_{a(K_i)} \Big( \| v_{aux}\|_{a(K_i)} + \|v_{aux}\|_{s(K_i)} \Big).
\end{equation*}
%where $C_{\mathcal{T}}$ is the square of the maximum number of vertices over all coarse elements.
Finally, using the spectral problem (\ref{spectralProblem_GMsFEM}), we have
\begin{equation*}
\| v_{aux}\|_{a(K_i)} \leq (\max_{1\leq j \leq l_i} \lambda_j^{(i)}) \| v_{aux}\|_{s(K_i)}.
\end{equation*}
This completes the first step.

\noindent
{\bf Step 2}:

From the above proof, we see that the minimization problem (\ref{eq:min}) has a unique solution $v\in V_{0}(K_i)$.
So, we see that $v$ and $v_{aux}$ satisfy (\ref{eq:loc_problem})-(\ref{eq:loc_problem2}).
From (\ref{eq:loc_problem2}), we see that $\pi_i(v)=v_{aux}$.
We note that the other two conditions in the lemma
follow from the above proof.

\end{proof}

Notice that, we can assume $D\geq 1$ in Lemma \ref{lem:infsup}.

Before estimate the difference between the global and multiscale basis function, we need some notations for the oversampling domain and the cutoff function with respect to these oversampling domains.
For each $K_i$, we recall that $K_{i,m} \subset \Omega$ is the oversampling coarse region by enlarging
$K_{i}$ by $m$ coarse grid layers. For $M>m$, we define $\chi_{i}^{M,m}\in\text{span}\{\chi^{ms}_{j}\}$
such that $0 \leq \chi_i^{M,m} \leq 1$ and
\begin{align}
\chi_{i}^{M,m} & =1\text{ in }K_{i,m}, \label{cutoff1} \\
\chi_{i}^{M,m} & =0\text{ in }\Omega\backslash K_{i,M}. \label{cutoff2}
\end{align}
Note that, we have $K_{i,m} \subset K_{i,M}$. Moreover,
$\chi_i^{M,m}=1$ on the inner region $K_{i,m}$
and $\chi_i^{M,m}=0$ outside the outer region $K_{i,M}$.
%We also assume the cutoff function $\chi_{i}^{M,m}$ satisfies
%\begin{align}
%|\nabla\chi_{i}^{M,m}(x)| & \leq C_1(M-m)^{-1}\Big(\sum_{j=1}^{N_c} |\nabla\chi_{j}^{ms}|^{2}\Big)^{\frac{1}{2}}, \label{cutoff3} \\
%\chi_{i}^{M,m}(x)-\cfrac{1}{|K_j|}\int_{K_j}\chi_{i}^{M,m} & \leq C_2(M-m)^{-1}, \quad \forall j\ne i. \label{cutoff4}
%\end{align}

The following lemma shows that our multiscale basis functions have
a decay property.
In particular, the multiscale basis functions are small
outside an oversampled region specified in the lemma.

\begin{lemma}
\label{lem:decay}
We consider the oversampled domain $K_{i,k}$ with $k\geq2$.
That is, $K_{i,k}$ is an oversampled region by enlarging $K_i$ by $k$ coarse grid layers.
Let $\phi_j^{(i)} \in V_{aux}$ be a given auxiliary multiscale basis function.
We let $\psi_{j,ms}^{(i)}$ be the multiscale basis functions obtained in (\ref{eq:ms-min})
and let $\psi_{j}^{(i)}$ be the global multiscale basis functions obtained in (\ref{eq:glo-min}).
Then we have
\[
\|\psi_{j}^{(i)}-\psi_{j,ms}^{(i)}\|^2_{a}\leq E\, \|\phi_{j}^{(i)}\|^2_{s(K_i)}
\]
where $E = 8D^2(1+\Lambda^{-1}) \Big(1+\cfrac{\Lambda^{\frac{1}{2}}}{2D^{\frac{1}{2}}} \Big)^{1-k}$.
\end{lemma}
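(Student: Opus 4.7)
\smallskip
\noindent\textbf{Proof plan for Lemma \ref{lem:decay}.}
The plan is to exploit two facts simultaneously: first, the difference $\eta := \psi_j^{(i)} - \psi_{j,ms}^{(i)}$ lies in $\tilde{V}$ and is $a$-orthogonal to $\tilde{V}\cap V_0(K_{i,k})$; second, Lemma \ref{lem:infsup} provides a local, bounded right-inverse of the projection $\pi$, which lets us correct any test function to be $\pi$-orthogonal without enlarging its support or inflating its energy beyond the factor $D$. Combining these with a cutoff function supported in a shell of the oversampling domain will yield a Caccioppoli-type estimate that, when iterated over the $k$ coarse layers, produces the advertised geometric factor $(1+\Lambda^{1/2}/(2D^{1/2}))^{1-k}$.

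First, I would set up the orthogonality. Both $\psi_j^{(i)}$ and $\psi_{j,ms}^{(i)}$ are $\phi_j^{(i)}$-orthogonal, so $\pi(\eta)=0$, i.e., $\eta\in\tilde{V}$. Writing out the Euler--Lagrange system for the two minimization problems (\ref{eq:glo-min}) and (\ref{eq:ms-min}) with multipliers in $V_{aux}$, one gets $a(\psi_j^{(i)},v)=0$ for every $v\in\tilde{V}$, and $a(\psi_{j,ms}^{(i)},v)=0$ for every $v\in\tilde{V}\cap V_0(K_{i,k})$. Subtracting,
\begin{equation*}
a(\eta,v)=0 \qquad\forall\,v\in\tilde{V}\cap V_0(K_{i,k}).
\end{equation*}
This Galerkin orthogonality is the engine that drives the decay.

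Next, introduce the cutoff $\chi := 1-\chi_i^{k,k-1}$, which vanishes on $K_{i,k-1}$ and equals $1$ outside $K_{i,k}$, so that $\chi\psi_j^{(i)}$ essentially isolates the ``tail'' of the global basis outside the inner oversampling region, while $(1-\chi)\psi_j^{(i)}$ is supported in $K_{i,k}$. The natural candidate test function is $(1-\chi)\psi_j^{(i)} - \psi_{j,ms}^{(i)}$, but this need not be $\pi$-orthogonal. Here Lemma \ref{lem:infsup} enters: it produces a corrector $\tau\in V$, supported in $\mathrm{supp}(\pi((1-\chi)\psi_j^{(i)}-\psi_{j,ms}^{(i)}))\subset K_{i,k}$, with $\pi(\tau)=\pi((1-\chi)\psi_j^{(i)}-\psi_{j,ms}^{(i)})$ and $\|\tau\|_a^2\leq D\|\pi((1-\chi)\psi_j^{(i)}-\psi_{j,ms}^{(i)})\|_s^2$. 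The function $w:=(1-\chi)\psi_j^{(i)}-\psi_{j,ms}^{(i)}-\tau$ then lies in $\tilde{V}\cap V_0(K_{i,k})$, and testing $a(\eta,w)=0$ together with algebraic manipulation yields
\begin{equation*}
\|\eta\|_a^2 \;\lesssim\; \|\chi\psi_j^{(i)}\|_a^2 + \|\tau\|_a^2,
\end{equation*}
where both terms involve only the shell $K_{i,k}\setminus K_{i,k-1}$ (via the gradient of $\chi$) and the $s$-norm of $\psi_j^{(i)}$ on that shell (via the corrector bound and the observation that on this shell $\pi(\psi_j^{(i)})=0$, so only the $\pi$-image of $(1-\chi)\psi_j^{(i)}$ contributes).

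The main obstacle, and the real work, is the iteration step. One shows that
\begin{equation*}
\|\psi_j^{(i)}\|_{a(\Omega\setminus K_{i,\ell})}^2 \;\leq\; \beta \,\|\psi_j^{(i)}\|_{a(\Omega\setminus K_{i,\ell-1})}^2
\end{equation*}
for a contraction factor $\beta = (1+\Lambda^{1/2}/(2D^{1/2}))^{-1}<1$. To get this, rerun the above argument but with the cutoff supported in the shell $K_{i,\ell}\setminus K_{i,\ell-1}$, using the $a$-harmonicity of $\psi_j^{(i)}$ against $\tilde{V}$, a Caccioppoli estimate relating $\|\chi\psi_j^{(i)}\|_{a(\text{shell})}^2$ to $\|\psi_j^{(i)}\|_{s(\text{shell})}^2$, and the spectral gap $\Lambda$ to convert the $s$-norm of an element of $\tilde{V}$ on a coarse block into $\Lambda^{-1}$ times its $a$-norm there. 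Unwinding the recursion from $\ell=k$ down to $\ell=1$ produces the $(1-k)$-th power of $(1+\Lambda^{1/2}/(2D^{1/2}))$, and the initial energy $\|\psi_j^{(i)}\|_a^2$ is bounded by $D\|\phi_j^{(i)}\|_{s(K_i)}^2$ directly from Lemma \ref{lem:infsup} applied with $v_{aux}=\phi_j^{(i)}/\|\phi_j^{(i)}\|_{s(K_i)}$ and the minimizing property of $\psi_j^{(i)}$. Tracking the constants (an $8$ from squarings and a $(1+\Lambda^{-1})$ from the $s$-to-$a$ conversion) gives $E = 8D^2(1+\Lambda^{-1})(1+\Lambda^{1/2}/(2D^{1/2}))^{1-k}$. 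The delicate point throughout is keeping the corrector $\tau$ uniformly controlled by quantities localized to the current shell, since otherwise the iteration leaks energy from the interior and the geometric decay breaks.
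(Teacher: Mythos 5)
Your plan is correct and follows essentially the same route as the paper's own proof: Galerkin orthogonality of $\psi_j^{(i)}-\psi_{j,ms}^{(i)}$ against $\tilde V\cap V_0(K_{i,k})$, cutoff test functions pushed back into $\tilde V$ by the bounded local right-inverse of $\pi$ from Lemma \ref{lem:infsup}, a layer-by-layer contraction driven by the spectral gap $\Lambda$, and the initial energy bound $\|\psi_j^{(i)}\|_a^2\le D\|\phi_j^{(i)}\|_{s(K_i)}^2$ from minimality. The only (immaterial) difference is bookkeeping: the paper subtracts the locally supported corrector $\tilde\phi_j^{(i)}$ (with $\pi(\tilde\phi_j^{(i)})=\phi_j^{(i)}$ and $\mathrm{supp}(\tilde\phi_j^{(i)})\subset K_i$) and runs the recursion on the tails of $\eta=\psi_j^{(i)}-\tilde\phi_j^{(i)}$, whereas you run it on the tails of $\psi_j^{(i)}$ directly; since $\tilde\phi_j^{(i)}$ vanishes outside $K_i$, these tails coincide and the two recursions are identical.
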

\begin{proof}
For the given $\phi_j^{(i)} \in V_{aux}$,
using Lemma \ref{lem:projection}, there exist a $\tilde{\phi}_{j}^{(i)}\in V_{h}$
such that
\begin{equation}
\label{eq:est}
\pi(\tilde{\phi}_{j}^{(i)})=\phi_{j}^{(i)}, \quad \|\tilde{\phi}_{j}^{(i)}\|_{a}^{2}\leq D\|\phi_{j}^{(i)}\|_{s}^{2} \quad
\text{and} \quad \text{supp}(\tilde{\phi}_{j}^{(i)})\subset K_{i}.
\end{equation}
We let
$\eta=\psi_{j}^{(i)}-\tilde{\phi}_{j}^{(i)}$. Note that $\eta \in\tilde{V}_h$ since $\pi(\eta)=0$.
By using the resulting variational forms of the minimization problems (\ref{eq:ms-min}) and (\ref{eq:glo-min}),
we see that $\psi_{j}^{(i)}$ and $\psi_{j,ms}^{(i)}$ satisfy
\[
a(\psi_{j}^{(i)},v) + s(v,\mu_j^{(i)}) = 0, \quad \; \forall v\in V
\]
and
\[
a(\psi_{j,ms}^{(i)},v) + s(v,\mu_{j,ms}^{(i)})= 0, \quad\; \forall v\in V_{0}(K_{i,k})
\]
for some $\mu_j^{(i)}, \mu_{j,ms}^{(i)} \in V_{aux}$.
Subtracting the above two equations and restricting $v \in \tilde{V}_{0}(K_{i,k})$, we have
\begin{equation*}
a(\psi_{j}^{(i)}-\psi_{j,ms}^{(i)},v) =0 ,\quad \forall v\in \tilde{V}_{0}(K_{i,k}).
\end{equation*}
Therefore, for $v\in \tilde{V}_{0}(K_{i,k})$, we have
\begin{equation*}
\begin{split}
\|\psi_{j}^{(i)}-\psi_{j,ms}^{(i)}\|_{a}^2
&= a(\psi_{j}^{(i)}-\psi_{j,ms}^{(i)},\psi_{j}^{(i)}-\psi_{j,ms}^{(i)}) \\
&= a(\psi_{j}^{(i)}-\psi_{j,ms}^{(i)},\psi_{j}^{(i)}-\tilde{\phi}_j^{(i)}-\psi_{j,ms}^{(i)}+\tilde{\phi}_j^{(i)})
= a(\psi_{j}^{(i)}-\psi_{j,ms}^{(i)}, \eta-v)
\end{split}
\end{equation*}
since $-\psi_{j,ms}^{(i)}+\tilde{\phi}_j^{(i)} \in \tilde{V}_{0}(K_{i,k})$. So, we obtain
\begin{equation}
\label{eq:estimate1}
\|\psi_{j}^{(i)}-\psi_{j,ms}^{(i)}\|_{a}\leq\|\eta-v\|_{a}
\end{equation}
 for all $v\in\tilde{V}_{0}(K_{i,k}).$

Next, we will estimate $\|\psi_{j}^{(i)}-\psi_{j,ms}^{(i)}\|_{a}$.
We consider the $i$-th coarse block $K_i$.
For this block, we consider two oversampled regions $K_{i,k-1}$ and $K_{i,k}$.
Using these two oversampled regions, we define the cutoff function $\chi_{i}^{k,k-1}$
with the properties in (\ref{cutoff1})-(\ref{cutoff2}), where we take $m=k-1$ and $M=k$.
For any coarse block $K_j \subset K_{i,k-1}$, by (\ref{cutoff1}), we have $\chi_i^{k,k-1} \equiv 1$
on $K_j$. Since $\eta \in \tilde{V}$, we have
\begin{equation*}
s_{j}(\chi_{i}^{k,k-1}\eta,\phi_{n}^{(j)})
 = s_{j}(\eta,\phi_{n}^{(j)}) = 0, \quad \forall n=1,2,\cdots, l_j.
\end{equation*}
From the above result and the fact that $\chi_i^{k,k-1} \equiv 0$ in $\Omega\backslash K_{i,k}$, we have
\begin{equation*}
\text{supp}\Big(\pi(\chi_{i}^{k,k-1)}\eta)\Big)\subset K_{i,k}\backslash K_{i,(k-1)}.
\end{equation*}
Using Lemma \ref{lem:projection}, for the function $\pi (\chi_{i}^{k,k-1}\eta)$,
there is $\mu\in V$ such that $\text{supp}(\mu) \subset K_{i,k}\backslash K_{i,k-1}$ and
$\pi(\mu-\chi_{i}^{k,k-1}\eta)=0$. Moreover, also from Lemma \ref{lem:projection},
\begin{align}
\label{eq:estimate3}
\|\mu\|_{a(K_{i,k}\backslash K_{i,k-1})} & \leq D^{\frac{1}{2}}\|\pi(\chi_{i}^{k,k-1}\eta)\|_{s(K_{i,k}\backslash K_{i,k-1})}
  \leq D^{\frac{1}{2}}\|\chi_{i}^{k,k-1}\eta\|_{s(K_{i,k}\backslash K_{i,k-1})}
\end{align}
where the last inequality follows from the fact that $\pi$ is a projection.
Hence, taking $v = \mu-\chi_{i}^{k,k-1}\eta$ in (\ref{eq:estimate1}), we have
\begin{align}
\label{eq:estimate2}
\|\psi_{j}^{(i)}-\psi_{j,ms}^{(i)}\|_{a} \leq \|\eta-v\|_{a} & \leq\|(1-\chi_{i}^{k,k-1})\eta\|_{a}+\|\mu\|_{a(\omega_{i,k}\backslash\omega_{i,k-1})}.
\end{align}
Next, we will estimate the two terms on the right hand side. We will divide the proof
in four steps.

\noindent
{\bf Step 1}: We will estimate the term $\|(1-\chi_{i}^{k,k-1})\eta\|_{a}$ in (\ref{eq:estimate2}). By a direct computation, we have
\begin{align*}
\|(1-\chi_{i}^{k,k-1})\eta\|_{a}^{2} & \leq 2(\int_{\Omega\backslash K_{i,k-1}}\kappa(1-\chi_{i}^{k,k-1})^{2}|\nabla\eta|^{2}
+\int_{\Omega\backslash K_{i,k-1}}\kappa|\nabla\chi_{i}^{k,k-1}|^{2}\eta^{2}).
\end{align*}
Note that, we have $1-\chi_{i}^{k,k-1} \leq 1$.
For the second term on the right hand side of the above inequality,
we will use the fact that $\eta \in \tilde{V}_h$ and the spectral problem (\ref{spectralProblem_GMsFEM}).
Thus, we conclude that
\begin{align*}
\|(1-\chi_{i}^{k,k-1})\eta\|_{a}^{2} & \leq
 2(1+\cfrac{1}{\Lambda})\int_{\Omega\backslash K_{i,k-1}}\kappa|\nabla\eta|^{2}.
\end{align*}
We will estimate the right hand side in Step 3.

\noindent
{\bf Step 2}: We will estimate the term $\|\mu\|_{a(K_{i,k}\backslash K_{i,k-1})}$ in (\ref{eq:estimate2}).
By (\ref{eq:estimate3}) and $|\chi_i^{k,k-1}| \leq 1$, we have
\begin{align*}
\|\mu\|_{a(K_{i,k}\backslash K_{i,k-1})}^{2} & \leq D \|(\chi_{i}^{k,k-1}\eta)\|_{s(K_{i,k}\backslash K_{i,k-1})}^{2}
% & \leq C\int_{K_{i,lk}\backslash K_{i,k-1}}\kappa(\chi_{i}^{lk,k-1})^{2}|\nabla\eta|^{2}+C\int\kappa|\nabla\chi_{i}^{lk,k-1}|^{2}\eta^{2}\\
  \leq \cfrac{D}{\Lambda}\int_{K_{i,k}\backslash K_{i,k-1}}\kappa|\nabla\eta|^{2}.
\end{align*}

Combining Step 1 and Step 2, we obtain
\begin{equation}
\label{eq:estimate4}
\|\psi_{j}^{(i)}-\psi_{j,ms}^{(i)}\|_{a}^2
\leq 2D (1+\cfrac{1}{\Lambda}) \|\eta\|^2_{a(\Omega\backslash K_{i,k-1})}.
\end{equation}

\noindent
{\bf Step 3}: Finally, we will estimate the term $\|\eta\|_{a(\Omega\backslash K_{i,k-1})}$.
We will first show that the following recursive inequality holds
\begin{equation}
\label{eq:recursive}
\|\eta\|_{a(\Omega\backslash K_{i,k-1})}^{2}\leq\Big(1+\cfrac{\Lambda^{\frac{1}{2}}}{2D^{\frac{1}{2}}} \Big)^{-1}\|\eta\|_{a(\Omega\backslash K_{i,k-2})}^{2}.
%\quad \text{with} \; \Big(1+\cfrac{\Lambda^{\frac{1}{2}}}{2D^{\frac{1}{2}}} \Big)^{-1} < 1
\end{equation}
where $k-2 \geq 0$. Using (\ref{eq:recursive}) in (\ref{eq:estimate4}), we have
\begin{equation}
\label{eq:estimate5}
\|\psi_{j}^{(i)}-\psi_{j,ms}^{(i)}\|_{a}^2
\leq 2D (1+\cfrac{1}{\Lambda}) \, \Big(1+\cfrac{\Lambda^{\frac{1}{2}}}{2D^{\frac{1}{2}}} \Big)^{-1}\|\eta\|_{a(\Omega\backslash K_{i,k-2})}^{2}.
\end{equation}
By using (\ref{eq:recursive}) again in (\ref{eq:estimate5}), we conclude that
\begin{equation}
%\label{eq:estimate6}
\|\psi_{j}^{(i)}-\psi_{j,ms}^{(i)}\|_{a}^2
\leq 2D (1+\cfrac{1}{\Lambda}) \, \Big(1+\cfrac{\Lambda^{\frac{1}{2}}}{2D^{\frac{1}{2}}} \Big)^{1-k} \|\eta\|_{a(\Omega\backslash K_{i})}^{2}
\leq 2D (1+\cfrac{1}{\Lambda}) \, \Big(1+\cfrac{\Lambda^{\frac{1}{2}}}{2D^{\frac{1}{2}}} \Big)^{1-k} \|\eta\|_{a}^{2}.
\end{equation}
By the definition of $\eta$ and the energy minimizing property of $\psi_j^{(i)}$, we have
\begin{equation*}
\|\eta\|_{a} = \| \psi_{j}^{(i)}-\tilde{\phi}_{j}^{(i)} \|_a \leq  2\| \tilde{\phi}_{j}^{(i)} \|_a
\leq 2D^{\frac{1}{2}} \| \phi_{j}^{(i)} \|_{s(K_i)}
\end{equation*}
where the last inequality follows from (\ref{eq:est}).

\noindent
{\bf Step 4}. We will prove the estimate (\ref{eq:recursive}). Let $\xi = 1-\chi_i^{k-1,k-2}$.
Then we see that $\xi \equiv 1$ in $\Omega\backslash K_{i,k-1}$
and $0 \leq \xi \leq 1$ otherwise. Then we have
%Let $\xi_{i,j}=(1-\chi_{i}^{lj,l(j-1)})^{2}$. Then
\begin{align}
\label{bound}
\|\eta\|_{a(\Omega\backslash K_{i,k-1})}^{2} & \leq\int_{\Omega}\kappa\xi^{2}|\nabla\eta|^{2}
  =\int_{\Omega}\kappa\nabla\eta\cdot\nabla(\xi^{2}\eta)-2\int_{\Omega}\kappa\xi\eta\nabla\xi\cdot\nabla\eta.
\end{align}
We estimate the first term in (\ref{bound}).
For the function $\pi(\xi^2 \eta)$,
using Lemma \ref{lem:projection}, there exist $\gamma \in V$ such that $\pi(\gamma)=\pi(\xi^{2}\eta)$
and $\text{supp}(\gamma) \subset \text{supp}(\pi(\xi^2 \eta))$.
For any coarse element $K_m \subset \Omega\backslash K_{i,k-1}$, since $\xi \equiv 1$ on $K_m$, we have
\begin{equation*}
s_{m}(\xi^{2}\eta,\phi_{n}^{(m)})=0, \quad \forall n = 1,2, \cdots, l_m.
\end{equation*}
On the other hand, since $\xi \equiv 0$ in $K_{i,k-2}$, we have
\begin{equation*}
s_{m}(\xi^{2}\eta,\phi_{n}^{(m)})=0, \quad \forall n = 1,2, \cdots, l_m, \; \forall K_m \subset K_{i,k-2}.
\end{equation*}
From the above two conditions, we see that $\text{supp}(\pi(\xi^2 \eta)) \subset K_{i,k-1} \backslash K_{i,k-2}$,
and consequently
$\text{supp}(\gamma) \subset K_{i,k-1} \backslash K_{i,k-2}$.
Note that, since $\pi(\gamma)=\pi(\xi^{2}\eta)$, we have $\xi^2 \eta - \gamma \in \tilde{V}$.
We note also that $\text{supp}(\xi^2 \eta - \gamma) \subset \Omega \backslash K_{i,k-2}$.
By (\ref{eq:est}), the functions $\tilde{\phi}_j^{(i)}$ and $\xi^2 \eta - \gamma$ have disjoint supports,
so $a(\tilde{\phi}_j^{(i)}, \xi^2\eta - \gamma)=0$. Then, by the definition of $\eta$, we have
\begin{equation*}
a(\eta, \xi^2\eta - \gamma) = a(\psi_j^{(i)}, \xi^2\eta - \gamma).
\end{equation*}
By the construction of $\psi_j^{(i)}$, we have $a(\psi_j^{(i)}, \xi^2\eta - \gamma)=0$.
Then we can estimate the first term in (\ref{bound}) as follows
\begin{align*}
\int_{\Omega}\kappa\nabla\eta\cdot\nabla(\xi^{2}\eta) & =\int_{\Omega}\kappa\nabla\eta\cdot\nabla \gamma \\
 & \leq D^{\frac{1}{2}} \|\eta\|_{a(K_{i,k-1}\backslash K_{i,k-2})}
\|\pi(\xi^{2}\eta)\|_{s(K_{i,k-1}\backslash K_{i,k-2})}.
\end{align*}
For all coarse element $K \subset K_{i,k-1}\backslash K_{i,k-2}$, since $\pi(\eta)=0$,
we have
\begin{align*}
\|\pi(\xi^{2}\eta)\|_{s(K)}^{2} & =\|\pi(\xi^{2}\eta)\|_{s(K)}^{2}
  \leq \|\xi^{2}\eta\|_{s(K)}^{2}
  \leq (\cfrac{1}{\Lambda})\int_{K}\kappa|\nabla\eta|^{2}.
\end{align*}
Summing the above over all coarse elements $K \subset K_{i,k-1}\backslash K_{i,k-2}$, we have
\[
\|\pi(\xi^{2}\eta)\|_{s(K_{i,k-1}\backslash K_{i,k-2})}
\leq(\cfrac{1}{\Lambda})^{\frac{1}{2}}\|\eta\|_{a(K_{i,k-1}\backslash K_{i,k-2}))}.
\]
To estimate the second term in (\ref{bound}), using the spectral problem (\ref{spectralProblem_GMsFEM}),
\begin{align*}
2\int_{\Omega}\kappa\xi\eta\nabla\xi\cdot\nabla\eta & \leq2\|\eta\|_{s(\Omega\backslash K_{i,k-2})}\|\eta\|_{a(K_{i,k-1}\backslash K_{i,k-2})}\\
 & \leq\cfrac{2}{\Lambda^{\frac{1}{2}}}\|\eta\|_{a(K_{i,k-1}\backslash K_{i,k-2})}^2.
\end{align*}
Hence, by using the above results, (\ref{bound}) can be estimated as
\[
\|\eta\|_{a(\Omega\backslash K_{i,(k-1)})}^{2}\leq\cfrac{2D^{\frac{1}{2}}}{\Lambda^{\frac{1}{2}}}\|\eta\|_{a(K_{i,(k-1)}\backslash K_{i,(k-2)})}^{2}.
\]
By using the above inequality, we have
\begin{align*}
\|\eta\|_{a(\Omega\backslash K_{i,(k-2)})}^{2} &= \|\eta\|_{a(\Omega\backslash K_{i,(k-1)})}^{2} + \|\eta\|_{a(K_{i,(k-1)}\backslash K_{i,(k-2)})}^{2} \\
& \geq \Big(1+\cfrac{\Lambda^{\frac{1}{2}}}{2D^{\frac{1}{2}}} \Big) \|\eta\|_{a(\Omega\backslash K_{i,(k-1)})}^{2}
\end{align*}

This completes the proof.

\end{proof}

The above lemma shows the global basis is localizable.
%In particular, we can choose the integer $l$ such that $\cfrac{CD^{\frac{1}{2}}}{l\Lambda^{\frac{1}{2}}} < 1$.
%Under this choice, the decay is exponential with respect to the integer $k$.
Next, we use the above result to obtain an estimate of the error between the solution $u$ and the multiscale solution $u_{ms}$.

\begin{theorem}
\label{thm:conv1}
Let $u$ be the solution of (\ref{eq:finesol}) and $u_{ms}$ be
the multiscale solution of (\ref{eq:mssol}). Then we have
\[
\|u-u_{ms}\|_{a}\leq C\Lambda^{-\frac{1}{2}}
\| \tilde{\kappa}^{-\frac{1}{2}} f\|_{L^{2}}+C k^{d}E^{\frac{1}{2}} \|u_{glo}\|_s
\]
where $u_{glo}\in V_{glo}$ is the multiscale solution using global basis.
Moreover, if $k=O(log(\cfrac{\max\{\kappa\}}{H}))$ and $\chi_{i}$ are bilinear partition
of unity, we have
\[
\|u_{h}-u_{ms}\|_{a}\leq CH\Lambda^{-\frac{1}{2}} \| \kappa^{-\frac{1}{2}} f\|_{L^{2}(\Omega)}.
\]

\end{theorem}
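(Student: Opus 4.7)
The plan is to decompose the error through the global multiscale solution $u_{glo}$ and then invoke Galerkin optimality. Since $u_{ms}$ is the $a$-projection of $u$ onto $V_{ms}$, for any $w \in V_{ms}$ I have the quasi-best approximation bound $\|u-u_{ms}\|_a \leq \|u-w\|_a$, so inserting $u_{glo}$ via the triangle inequality yields
\[
\|u-u_{ms}\|_a \leq \|u-u_{glo}\|_a + \|u_{glo}-w\|_a .
\]
The first term is controlled directly by Lemma~\ref{lem:glosol}, which supplies the $C\Lambda^{-1/2}\|\tilde{\kappa}^{-1/2} f\|_{L^2}$ contribution (and its $H$-improved version when the bilinear partition of unity is used).

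For the second term I would choose $w$ to be the natural multiscale analog of $u_{glo}$. Writing the expansion $u_{glo} = \sum_{i,j} c_{ij}\,\psi_j^{(i)}$ in the global basis, I take $w = \sum_{i,j} c_{ij}\,\psi_{j,ms}^{(i)} \in V_{ms}$, so that
\[
u_{glo}-w = \sum_{i,j} c_{ij}\bigl(\psi_j^{(i)}-\psi_{j,ms}^{(i)}\bigr).
\]
Lemma~\ref{lem:decay} supplies the per-basis bound $\|\psi_j^{(i)}-\psi_{j,ms}^{(i)}\|_a \leq E^{1/2}\|\phi_j^{(i)}\|_{s(K_i)}$. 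To relate the coefficients to $\|u_{glo}\|_s$, I would use the $\phi$-orthogonality structure: normalizing the $\phi_j^{(i)}$ so that $s_i(\phi_j^{(i)},\phi_{j'}^{(i)}) = \delta_{jj'}$, the definition of $\psi_j^{(i)}$ gives $\pi(\psi_j^{(i)}) = \phi_j^{(i)}$ and therefore $\pi(u_{glo}) = \sum_{i,j} c_{ij}\,\phi_j^{(i)}$. Since the $\phi_j^{(i)}$ are $s$-orthonormal across blocks (their supports are disjoint) and $\pi$ is an $s$-orthogonal projection, $\sum_{i,j} c_{ij}^2 = \|\pi(u_{glo})\|_s^2 \leq \|u_{glo}\|_s^2$.

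To pass from the per-basis decay to a global estimate with only a polynomial $k^d$ penalty, I would combine Lemma~\ref{lem:decay} with a finite overlap argument. Each $\psi_{j,ms}^{(i)}$ is supported in $K_{i,k}$, and any point of $\Omega$ lies in at most $O(k^d)$ such oversampling domains; this controls the cross terms when squaring $\|u_{glo}-w\|_a$, while the exponentially small tails of $\psi_j^{(i)}$ outside $K_{i,k}$ are absorbed into the same constant $E$. Combined with Cauchy-Schwarz in the coefficient index this yields $\|u_{glo}-w\|_a \leq C k^d E^{1/2}\bigl(\sum_{i,j} c_{ij}^2\bigr)^{1/2} \leq C k^d E^{1/2}\|u_{glo}\|_s$, giving the first conclusion of the theorem. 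For the refined bound, I would substitute the sharper form of Lemma~\ref{lem:glosol} (with the extra $H$), use a standard energy estimate $\|u_{glo}\|_s \leq C \|\kappa^{-1/2} f\|_{L^2}$, and choose $k = O(\log(\max\kappa/H))$ so that the exponential decay factor $\bigl(1+\Lambda^{1/2}/(2D^{1/2})\bigr)^{1-k}$ hidden in $E$ makes $k^d E^{1/2}$ of order $H$.

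The hardest step of the plan is the finite overlap estimate: because each global basis function $\psi_j^{(i)}$ has global support, the naive coloring-by-disjoint-supports argument does not apply to the differences $\psi_j^{(i)}-\psi_{j,ms}^{(i)}$. I expect to handle this by splitting each difference into its part inside $K_{i,k}$, where the $O(k^d)$ overlap of the oversampled regions can be applied, and its tail outside $K_{i,k}$, where the exponential bound from Lemma~\ref{lem:decay} allows the tails to be absorbed into the same $E^{1/2}$ factor; then Cauchy-Schwarz in the coefficient index recombines everything with the $\|u_{glo}\|_s$ bound above.
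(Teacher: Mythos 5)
Your proposal is correct and follows essentially the same route as the paper's proof: Galerkin quasi-optimality plus the triangle inequality through $u_{glo}$, the interpolant $w=\sum_{i,j}c_{ij}\psi_{j,ms}^{(i)}$, Lemma~\ref{lem:decay} with a finite-overlap counting argument to produce the $Ck^{d}E^{\frac{1}{2}}\|u_{glo}\|_s$ term, and then an energy bound on $\|u_{glo}\|_s$ together with the choice $k=O(\log(\max\{\kappa\}/H))$ for the second estimate. In fact you make explicit two points the paper leaves terse: the coefficient bound $\sum_{i,j}c_{ij}^{2}=\|\pi(u_{glo})\|_{s}^{2}\leq\|u_{glo}\|_{s}^{2}$ via the $s$-orthogonal projection, and the near-field/tail splitting needed to justify the $k^{d}$ overlap factor when the global differences $\psi_{j}^{(i)}-\psi_{j,ms}^{(i)}$ are not compactly supported.
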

\begin{proof}
We write $u_{glo}=\sum_{i=1}^N\sum_{j=1}^{l_i} c_{j}^{(i)}\psi_{j}^{(i)}$.
Then we define $v=\sum_{i=1}^N \sum_{j=1}^{l_i} c_{j}^{(i)}\psi_{j,ms}^{(i)}\in V_{ms}$. So, by the Galerkin orthogonality, we have
\begin{align*}
\|u-u_{ms}\|_{a} & \leq\|u-v\|_{a}
  \leq\|u-u_{glo}\|_{a}+\|\sum_{i=1}^N\sum_{j=1}^{l_{i}} c_{j}^{(i)}(\psi_{j}^{(i)}-\psi_{j,ms}^{(i)})\|_{a}.
\end{align*}
Recall that the basis functions $\psi_{j,ms}^{(i)}$ have supports in $K_{i,k}$. So, by Lemma \ref{lem:decay},
%and since the number of overlaping $K_i^+$ is in order of $(lk)^d$, we have
\begin{align*}
\|\sum_{i=1}^N \sum_{j=1}^{l_i} c_{j}^{(i)}(\psi_{j}^{(i)}-\psi_{j,ms}^{(i)})\|_{a}^{2} & \leq C k^{2d}\sum_{i=1}^N\|\sum_{j=1}^{l_{i}}c_{j}^{(i)}(\psi_{j}^{(i)}-\psi_{j,ms}^{(i)})\|_{a}^{2}\\
 & \leq C k^{2d}E \sum_{i=1}^N \| \sum_{j=1}^{l_i} c_{j}^{(i)} \phi_{j}^{(i)}\|_{s}^{2}\\
 & \leq C k^{2d}E \| u_{glo}\|_s^2
\end{align*}
where the equality follows from the orthogonality of the eigenfunctions in (\ref{spectralProblem_GMsFEM}).
We also note that, in the above estimate, we apply Lemma \ref{lem:decay} to the function $\sum_{j=1}^{l_i} c_{j}^{(i)}(\psi_{j}^{(i)}-\psi_{j,ms}^{(i)})$.
By using Lemma \ref{lem:global_estimate}, we obtain
\[
\|u_{h}-u_{ms}\|_{a}\leq C\Lambda^{-\frac{1}{2}}
\| \tilde{\kappa}^{-\frac{1}{2}} f\|_{L^{2}}+Ck^{d}E^{\frac{1}{2}} \|u_{glo}\|_s.
\]
This completes the proof for the first part of the theorem.

%Since $s(\psi_{j}^{(i)},\phi_{m}^{(n)})=\delta_{in}\delta_{jm}$,
%we have $\cfrac{s(u_{glo},\phi_{j}^{(i)})}{s(\phi_{j}^{(i)},\phi_{j}^{(i)})}=c_{j}^{(i)}$
%and therefore $\sum(c_{j}^{(i)})^{2}\leq s(u_{glo},u_{glo}).$

%By using lemma \ref{lem:global_estimate} and \ref{lem:local_estimate}, we obtain the first inequality  immediately.

To proof the second inequality, we need to estimate the $s$-norm of the global solution $u_{glo}$. In particular,
\begin{align*}
\|u_{glo}\|_s^2 \leq \max\{\tilde{\kappa}  \} \|u_{glo}\|_{L^2(\Omega)}^2
 \leq C \kappa_0^{-1} \max\{\tilde{\kappa}  \} \|u_{glo}\|_{a}^2.
\end{align*}
Since $u_{glo}$ satisfies (\ref{eq:glosol}), we have
\begin{align*}
\|u_{glo}\|_a^2 = \int_\Omega f u_{glo}
 \leq \| \tilde{\kappa}^{-\frac{1}{2}} f\|_{L^{2}(\Omega)} \|u_{glo}\|_s.
\end{align*}
Therefore,  we have
\[
\|u_{glo}\|_s\leq C\kappa_0^{-1}\max\{\tilde{\kappa}  \} \|\tilde{\kappa}^{-\frac{1}{2}}f\|_{L^{2}(\Omega)}.
\]
Thus, to obtain the second inequality, we need to show that $\max\{\tilde{\kappa}  \} C k^d E^{\frac{1}{2}}$ is bounded.
Assuming the partition of unity functions are bilinear, then we have
\begin{equation*}
\max\{ \kappa\} H^{-2} Ck^d E^{\frac{1}{2}} = O(1).
\end{equation*}
Taking logarithm,
\begin{equation*}
d \log(k) + \frac{1-k}{2} \log( 1 + \frac{\Lambda^{\frac{1}{2}}}{2D^{\frac{1}{2}}}) + \log(\max{\kappa}) + \log(H^{-2}) = O(1).
\end{equation*}
Thus, we need $k=O(\log(\cfrac{\max\{\kappa\}}{H}))$. This completes the proof.
\end{proof}

We remark that the decay rate of the basis functions depends on the factor $1 + \frac{\Lambda^{\frac{1}{2}}}{2D^{\frac{1}{2}}}$.
Since $\Lambda$ is not small as eigenfunctions with small eigenvalues are used in the construction of auxiliary space,
we see that, when $D$ is not large, the decay is exponential.

\section{Numerical Result}
\label{sec:num}

In this section, we will present two numerical examples with two different high contrast media
to demonstrate the convergence of our proposed method.
We take the domain $\Omega = (0,1)^2$.
For the first numerical example, we consider the medium parameter $\kappa$ as shown in Figure \ref{fig:case1_kappa}
and assume that the fine mesh size
$h$ to be $1/400$. That is, the medium $\kappa$ has a $400\times 400$
resolution.
In this case, we consider the contrast of the medium is $10^4$ where the value of $\kappa$ is large in the red region.
The convergence history with various coarse mesh sizes $H$ are shown
in Table \ref{tab:case1_error}.
In all these simulations, we take the number of oversampling layer to be approximately
$4\log(1/H)/\log(1/10)$. Form Table \ref{tab:case1_error}, we can see the energy norm error converges in first order with respect to $H$
and the $L^2$ norm error converges in second order with respect to $H$. The first order convergence in the energy norm matches our theoretical bound.
%In these
%examples, we select 3 basis function per element as this is the minimum number
%of basis functions needed to represent the channels.
%Next, we will present examples with fewer basis functions and fewer oversampling
%layers.

\begin{figure}[ht!]
\centering
\includegraphics[scale=0.5]{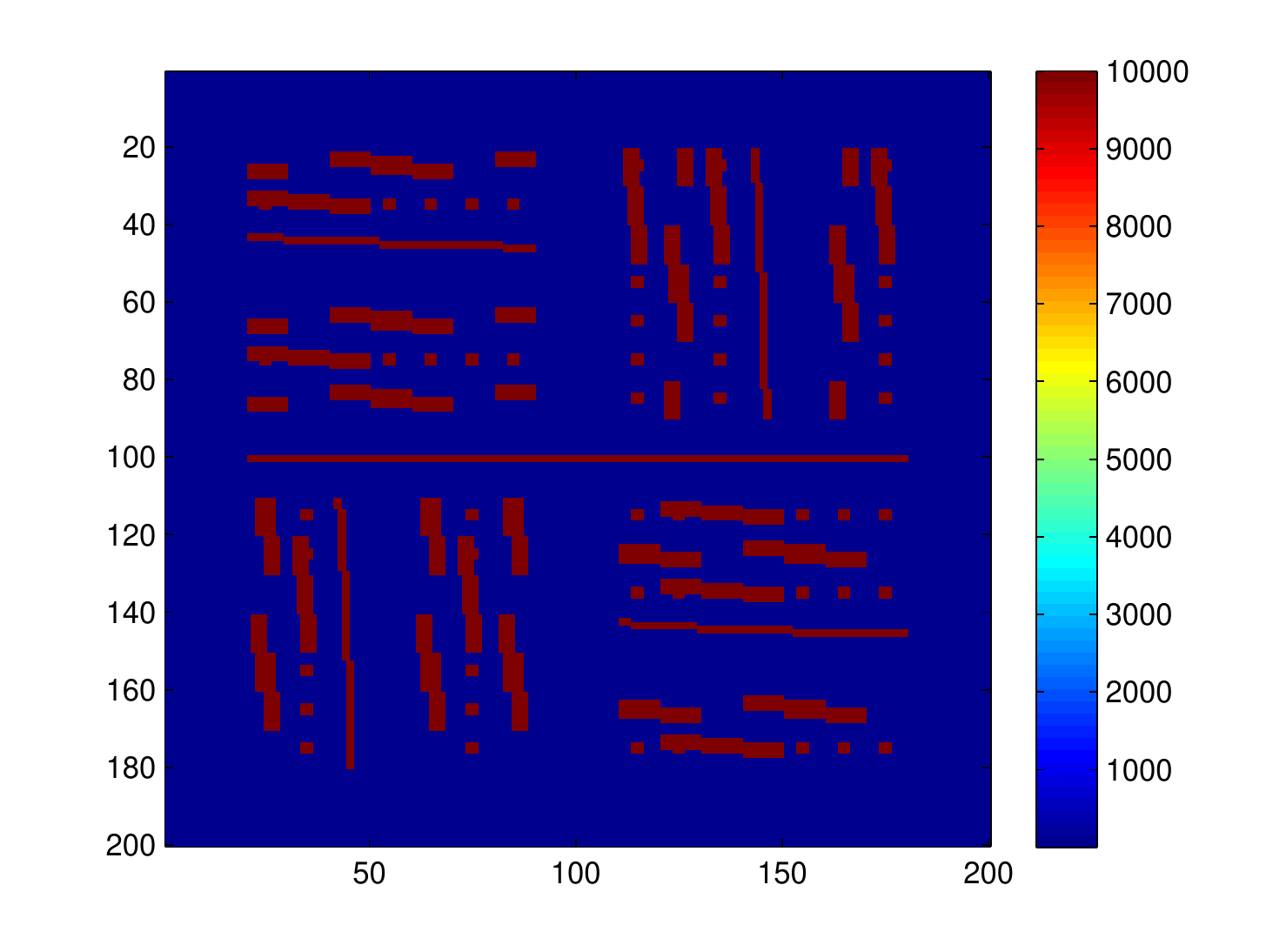}
\caption{The medium $\kappa$ for the test case $1$.}
\label{fig:case1_kappa}
\end{figure}

\begin{table}[ht!]
\centering
\begin{tabular}{|c|c|c|c|c|}
\hline
Number basis per element & H & \# oversampling coarse layers & $L_{2}$error & energy error\tabularnewline
\hline
3 & 1/10 & 4 & 2.62\% & 15.99\%\tabularnewline
\hline
3 & 1/20 & 6 (log(1/20)/log(1/10){*}4=5.20) & 0.51\% & 7.04\%\tabularnewline
\hline
3 & 1/40 & 7 (log(1/40)/log(1/10){*}4=6.41) & 0.11\% & 3.31\%\tabularnewline
\hline
3 & 1/80 & 8 (log(1/80)/log(1/10){*}4=7.61) & 0.0015\% & 0.17\%\tabularnewline
\hline
\end{tabular}

\caption{Numerical results with varying coarse grid size $H$ for the test case $1$.}
\label{tab:case1_error}
\end{table}

We emphasize that, in this example, we use $3$ basis functions per coarse region.
The reason of this is that the eigenvalue problem on each coarse region has $3$ small eigenvalues,
and according to our theory, we need to include the first three eigenfunctions in the auxiliary space.
As our theory shows, for a certain contrast value, one needs to use a large enough oversampling size
in order to obtain the desired convergence order. Moreover, for a fixed contrast value,
the results can be improved as the oversampling size increases.
On the other hand, for a fixed oversampling size, the performance of the scheme will deteriorate
as the medium contrast increases.
This is confirmed by our numerical evidence shown in Table \ref{tab:case1_compare}.

\begin{table}
\centering
\begin{tabular}{|c|c|c|c|c|}
\hline
 Layer $\backslash$ Contrast & 1e+3 & 1e+4 & 1e+5 & 1e+6\tabularnewline
\hline
3 & 48.74\% & 73.22\% & 87.83\% & 91.14\% \tabularnewline
\hline
4 & 19.12\% & 15.99\% & 26.68\% & 58.67\% \tabularnewline
\hline
5 & 3.70\% & 4.19\% & 7.17\% & 19.24\% \tabularnewline
\hline
\end{tabular}

\caption{Comparison of various number of oversampling layers and different contrast values for test case $1$.}
\label{tab:case1_compare}
\end{table}

\begin{figure}[ht!]
\centering
\includegraphics[scale=0.5]{medium2}
\caption{The medium $\kappa$ for the test case $2$.}
\label{fig:case2_kappa}
\end{figure}

\begin{table}[ht!]
\centering

\begin{tabular}{|c|c|c|c|c|}
\hline
Number basis per element & H & \# oversampling coarse layers & $L_{2}$error & energy error\tabularnewline
\hline
4 & 1/10 & 6 & 1.55\% & 11.10\%\tabularnewline
\hline
4 & 1/20 & 8 (log(1/20)/log(1/10){*}6=7.8062) & 0.02\% & 0.59\%\tabularnewline
\hline
4 & 1/40 & 10 (log(1/40)/log(1/10){*}6=9.61) & 0.0042\% & 0.23\%\tabularnewline
\hline
\end{tabular}

\protect\caption{Numerical results with varying coarse grid size $H$ for the test case $2$.}
\label{tab:case2_error}
\end{table}

In our second test case, we consider
the medium parameter $\kappa$ defined in Figure~\ref{fig:case2_kappa}.
In this case, the medium has a contrast value of $10^6$, and
the fine grid size $h$ is $1/200$. We will show the convergence history in Table~\ref{tab:case2_error} using different choices of coarse mesh sizes.
For all simulations, the number of oversampling layer is approximately
$6\log(1/H)/\log(10)$, and we use $4$ multiscale basis functions per coarse block since there are $4$ small eigenvalues
for some coarse blocks.
Form Table~\ref{tab:case2_error}, we can see that the method achieves the theoretical convergence rate.
In Table~\ref{tab:case2_compare}, we compare the performance of the method with different choices of oversampling layers
and contrast values. We see that, for a fixed choice of oversampling layer, the error increases moderately with respect to the contrast value.
On the other hand, for a fixed contrast value, the error will improve as the number of oversampling layers increases.
We also see that, the error will be small once an enough number of oversampling layer is used.

\begin{table}
\centering
\begin{tabular}{|c|c|c|c|c|}
\hline
 Layer $\backslash$ Contrast & 1e+3 & 1e+4 & 1e+5 & 1e+6\tabularnewline
\hline
4 & 21.88\% & 47.58\% & 65.94\% & 81.47\% \tabularnewline
\hline
5 & 6.96\% & 22.49\% & 33.09\% & 47.19\% \tabularnewline
\hline
6 & 1.82\% & 4.33\% & 6.49\% & 11.09\% \tabularnewline
\hline
\end{tabular}

\caption{Comparison of various number of oversampling layers and different contrast values for test case $2$.}
\label{tab:case2_compare}
\end{table}

%\section{Relaxing oversampling sizes via modified local solutions}
%

%{\bf to be removed}

\section{Relaxed constraint energy minimizing generalized multiscale finite element method}
\label{sec:mod}

In this section, we consider a relaxed version of our method. In particular, we relax
the constraint in the minimization problem (\ref{eq:ms-min}). Instead of (\ref{eq:ms-min}),
we solve the following un-constrainted minimization problem: find $\psi_{j,ms}^{(i)} \in V_0(K_{i,m})$ such that
\begin{equation}
\label{eq:ms-min-new}
 \psi_{j,ms}^{(i)}=\text{argmin}\Big\{ a(\psi,\psi) + s(\pi \psi - \phi_j^{(i)},\pi \psi-\phi_j^{(i)}) \; | \; \psi \in V_{0}(K_{i,m}) \Big\}.
\end{equation}
This minimization problem is equivalent to the following variational formulation
\begin{equation}
\label{eq:ms-min-new-var}
a(\psi_{j,ms}^{(i)},v)+s(\pi(\psi_{j,ms}^{(i)}),\pi(v))=s(\phi_{j}^{(i)},\pi(v)), \quad \;\forall v\in V_{0}(K_{i,m}).
\end{equation}
The global multiscale basis function $\psi_j^{(i)} \in V$ is defined in a similar way, namely,
\begin{equation}
\label{eq:glo-min-new}
 \psi_{j}^{(i)}=\text{argmin}\Big\{ a(\psi,\psi) + s(\pi \psi - \phi_j^{(i)},\pi \psi-\phi_j^{(i)}) \; | \; \psi \in V \Big\},
\end{equation}
which is equivalent to the following variational form
\begin{equation}
\label{eq:glo-min-new-var}
a(\psi_{j}^{(i)},v)+s(\pi(\psi_{j}^{(i)}),\pi(v))=s(\phi_{j}^{(i)},\pi(v)), \quad \;\forall v\in V.
\end{equation}
We remark that the spaces $V_{glo}$, $V_{ms}$ and $\tilde{V}$ are defined as before.

Given a function $\tilde{v}\in\tilde{V}_{h}$, we have $\pi(\tilde{v})=0$ by the definition.
Therefore $a(\psi_{j}^{(i)},\tilde{v})=0,$ that is $\tilde{V}_{h}\subset V_{glo}^{\bot}$
and since $\text{dim}(V_{glo})=\text{dim}(V_{aux})$, we have $\tilde{V}_{h}=V_{glo}^{\bot}$.
Thus, we have $V_{h}=V_{glo}\oplus\tilde{V}_{h}$.
Using this property, the solution $u_{glo}$ of (\ref{eq:glosol})
satisfies Lemma \ref{lem:glosol}.

\subsection{Analysis}

In this section, we analyze the convergence of this method.
%Note that, in this new approach, we do not need a result similar to the one in Lemma \ref{lem:infsup}.
In the following, we prove a result similar to the one in Lemma \ref{lem:decay}, with the aim
of estimating the difference between $\psi_{j,ms}^{(i)}$ and $\psi_j^{(i)}$.

\begin{lemma}
\label{lem:new}
We consider the oversampled domain $K_{i,k}$ with $k\geq 2$.
That is, $K_{i,k}$ is an oversampled region by enlarging $K_i$ by $k$ coarse grid layers.
Let $\phi_j^{(i)} \in V_{aux}$ be a given auxiliary multiscale basis function.
We let $\psi_{j,ms}^{(i)}$ be the multiscale basis functions obtained in (\ref{eq:ms-min-new})
and let $\psi_{j}^{(i)}$ be the global multiscale basis functions obtained in (\ref{eq:glo-min-new}).
Then we have
\[
\|\psi_{j}^{(i)}-\psi_{j,ms}^{(i)}\|_{a}^{2}+\|\pi(\psi_{j}^{(i)}-\psi_{j,ms}^{(i)})\|_{s}^{2}\leq E
\Big(\|\psi_{j}^{(i)}\|_{a}^{2}+\|\pi(\psi_{j}^{(i)})\|_{s}^{2}\Big)
\]
where $E = 3(1+\Lambda^{-1}) \Big (1+(2(1+\Lambda^{-\frac{1}{2}}))^{-1}\Big)^{1-k}$.
\end{lemma}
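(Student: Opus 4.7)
The plan is to closely mirror the proof of Lemma \ref{lem:decay}, while exploiting that the relaxed formulation no longer imposes any $\phi$-orthogonality constraint: this removes the need to invoke Lemma \ref{lem:projection}, so the constant $D$ should disappear from the final estimate. Set $\eta := \psi_j^{(i)} - \psi_{j,ms}^{(i)}$ and subtract the local variational form (\ref{eq:ms-min-new-var}) from the global one (\ref{eq:glo-min-new-var}) restricted to test functions in $V_0(K_{i,k})$ to obtain the Galerkin orthogonality
\[
a(\eta,v) + s(\pi\eta,\pi v) = 0, \qquad \forall v \in V_0(K_{i,k}).
\]
Abbreviating $\Tnorm{w}^2 := \|w\|_a^2 + \|\pi w\|_s^2$ (a norm on $V$), this orthogonality together with Cauchy--Schwarz yields the best-approximation property $\Tnorm{\eta} \leq \Tnorm{\eta-v}$ for every $v \in V_0(K_{i,k})$, the analogue of (\ref{eq:estimate1}) for the relaxed problem.

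The argument then has two steps. For the \emph{localizing step}, I take $v = \chi_i^{k,k-1}\psi_j^{(i)} - \psi_{j,ms}^{(i)} \in V_0(K_{i,k})$ so that $\eta - v = (1-\chi_i^{k,k-1})\psi_j^{(i)}$ is supported in $\Omega \setminus K_{i,k-1}$; expanding the $a$-norm via the Leibniz rule, bounding $\kappa|\nabla\chi_i^{k,k-1}|^2 \leq C_{\mathcal{T}}\tilde\kappa$ (since $\chi_i^{k,k-1} \in \mathrm{span}\{\chi_j^{ms}\}$), and invoking the per-block spectral estimate $\|w\|_{s(K_m)}^2 \leq \|\pi_m w\|_{s(K_m)}^2 + \Lambda^{-1}\|w\|_{a(K_m)}^2$ (which follows from (\ref{spectralProblem_GMsFEM})) produces $\Tnorm{\eta}^2 \leq 3(1+\Lambda^{-1})\,I_{k-1}$, where $I_m := \|\psi_j^{(i)}\|_{a(\Omega\setminus K_{i,m})}^2 + \|\pi\psi_j^{(i)}\|_{s(\Omega\setminus K_{i,m})}^2$. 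For the \emph{recursive decay step}, set $\xi := 1 - \chi_i^{k-1,k-2}$, so that $\xi \equiv 1$ outside $K_{i,k-1}$ and $\xi \equiv 0$ in $K_{i,k-2}$; since $k \geq 2$ implies $K_i \subset K_{i,k-2}$, the test function $\xi^2\psi_j^{(i)}$ has support disjoint from $K_i$, so $s(\phi_j^{(i)},\pi(\xi^2\psi_j^{(i)}))=0$ and (\ref{eq:glo-min-new-var}) gives
\[
a(\psi_j^{(i)}, \xi^2\psi_j^{(i)}) + s(\pi\psi_j^{(i)}, \pi(\xi^2\psi_j^{(i)})) = 0.
\]
Distributing $\nabla(\xi^2\psi_j^{(i)}) = \xi^2\nabla\psi_j^{(i)} + 2\xi\psi_j^{(i)}\nabla\xi$ and estimating the commutator term with Cauchy--Schwarz combined with the spectral bound on the annulus $K_{i,k-1}\setminus K_{i,k-2}$ (where $|\nabla\xi|$ lives) yields the recursion $I_{k-1} \leq (1 + (2(1+\Lambda^{-1/2}))^{-1})^{-1}\,I_{k-2}$.

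Iterating this recursion from $I_{k-1}$ down to $I_0$, and bounding $I_0 \leq \|\psi_j^{(i)}\|_a^2 + \|\pi\psi_j^{(i)}\|_s^2$, produces the factor $(1+(2(1+\Lambda^{-1/2}))^{-1})^{1-k}$, which combined with the $3(1+\Lambda^{-1})$ from the localizing step gives exactly the claimed constant $E$. The main obstacle is the recursive step: unlike Lemma \ref{lem:decay}, where $\pi\eta = 0$ made the $s$-contribution vanish, here both $\|\psi_j^{(i)}\|_a$ and $\|\pi\psi_j^{(i)}\|_s$ must be propagated simultaneously through the Leibniz expansion, and the cross term $\int \kappa\xi\psi_j^{(i)}\nabla\xi\cdot\nabla\psi_j^{(i)}$ has to be controlled by $\|\psi_j^{(i)}\|_{s(K_{i,k-1}\setminus K_{i,k-2})}^2 \leq \|\pi\psi_j^{(i)}\|_{s(K_{i,k-1}\setminus K_{i,k-2})}^2 + \Lambda^{-1}\|\psi_j^{(i)}\|_{a(K_{i,k-1}\setminus K_{i,k-2})}^2$ so that no contrast-dependent factor beyond $\Lambda^{-1/2}$ enters the decay rate $\alpha = 1+(2(1+\Lambda^{-1/2}))^{-1}$.
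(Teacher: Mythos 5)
Your proposal is correct in structure and follows essentially the same route as the paper's own proof: the same Galerkin orthogonality in the combined norm $\|\cdot\|_a^2+\|\pi(\cdot)\|_s^2$, the same best-approximation step with $w=\chi_i^{k,k-1}\psi_j^{(i)}$ combined with the spectral bound (\ref{eq:s-norm_Bound}) to produce the factor $3(1+\Lambda^{-1})$, and the same annulus recursion obtained by testing the global equation (\ref{eq:glo-min-new-var}) against a cutoff multiple of $\psi_j^{(i)}$ supported away from $K_i$ so that the right-hand side $s(\phi_j^{(i)},\pi(\cdot))$ vanishes.

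The one substantive deviation is quantitative. In the recursion step you test with $\xi^2\psi_j^{(i)}$ (mirroring Step 4 of Lemma \ref{lem:decay}, where the squared cutoff is natural because $\pi\eta=0$ there), whereas the paper tests with the first power $(1-\chi_i^{k-1,k-2})\psi_j^{(i)}$. With the squared cutoff the Leibniz cross term is $2\int\kappa\,\xi\,\psi_j^{(i)}\nabla\xi\cdot\nabla\psi_j^{(i)}$, carrying a factor $2$, so your annulus bound comes out as $\|\psi_j^{(i)}\|_{s}\bigl(2\|\psi_j^{(i)}\|_{a}+\|\pi(\psi_j^{(i)})\|_{s}\bigr)$ (all norms on $K_{i,k-1}\setminus K_{i,k-2}$), rather than the paper's $\|\psi_j^{(i)}\|_{s}\bigl(\|\psi_j^{(i)}\|_{a}+\|\pi(\psi_j^{(i)})\|_{s}\bigr)$ in (\ref{eq:bound3}). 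Tracing this through, your recursion base is $1+\bigl(4(1+\Lambda^{-1/2})\bigr)^{-1}$, not the claimed $1+\bigl(2(1+\Lambda^{-1/2})\bigr)^{-1}$, so the constant $E$ you obtain is slightly weaker than the one stated in the lemma. This does not affect the qualitative exponential decay, and replacing $\xi^2$ by $\xi$ in your test function (keeping the rest of your argument unchanged) recovers the paper's exact constant; everything else in your proposal matches the paper's proof.
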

\begin{proof}
By the definitions $\psi_{j,ms}^{(i)}$ and $\psi_j^{(i)}$ in (\ref{eq:ms-min-new-var}) and (\ref{eq:glo-min-new-var}), we have
\begin{align*}
a(\psi_{j,ms}^{(i)},v)+s(\pi(\psi_{j,ms}^{(i)}),\pi(v))&=s(\phi_{j}^{(i)},\pi(v)), \quad\;\;\forall v\in V_{0}(K_{i,k}), \\
a(\psi_{j}^{(i)},v)+s(\pi(\psi_{j}^{(i)}),\pi(v))&=s(\phi_{j}^{(i)},\pi(v)), \quad \;\;\forall v\in V.
\end{align*}
Subtracting the above two equations, we have
\[
a(\psi_{j}^{(i)}-\psi_{j,ms}^{(i)},v)+s(\pi(\psi_{j}^{(i)}-\psi_{j,ms}^{(i)}),\pi(v))=0
\]
for all $v\in V_{0}(K_{i,k})$. Taking $v = w-\psi_{j,ms}^{(i)}$ with $w\in V_0(K_{i,k})$ in the above relation, we have
\[
\|\psi_{j}^{(i)}-\psi_{j,ms}^{(i)}\|_{a}^{2}+\|\pi(\psi_{j}^{(i)}-\psi_{j,ms}^{(i)})\|_{s}^{2}\leq\|\psi_{j}^{(i)}-w\|_{a}^{2}+\|\pi(\psi_{j}^{(i)}-w)\|_{s}^{2},\quad\;\forall w\in V_{h,0}(K_{i,k}).
\]
Let $w=\chi^{k,k-1}_i \psi_{j}^{(i)}$ in the above relation, we have
\begin{equation}
\label{eq:new1}
\|\psi_{j}^{(i)}-\psi_{j,ms}^{(i)}\|_{a}^{2}+\|\pi(\psi_{j}^{(i)}-\psi_{j,ms}^{(i)})\|_{s}^{2}
\leq\|\psi_{j}^{(i)}-\chi_{i}^{k,k-1}\psi_{j}^{(i)}\|_{a}^{2}+\|\pi(\psi_{j}^{(i)}-\chi_{i}^{k,k-1}\psi_{j}^{(i)})\|_{s}^{2}.
\end{equation}
Next, we will estimate these two terms on the right hand side of (\ref{eq:new1}). We divide the proof into four steps.

\noindent
{\bf Step 1}: We will estimate the term $\|(1-\chi_{i}^{k,k-1})\psi_{j}^{(i)}\|^{2}_{a}$ in (\ref{eq:new1}). By the definition of the norm $\|\cdot\|_a$
and the fact that $\text{supp}( 1-\chi_{i}^{k,k-1} ) \subset \Omega\backslash K_{i,k-1}$,
we have
\begin{align*}
\|(1-\chi_{i}^{k,k-1})\psi_{j}^{(i)}\|^{2}_{a} & \leq 2\int_{\Omega\backslash K_{i,k-1}}\kappa|1-\chi_{i}^{k,k-1}|^{2}|\nabla\psi_{j}^{(i)}|^{2}+\int_{\Omega\backslash K_{i,k-1}}\kappa|\nabla\chi_{i}^{k,k-1}|^{2}|\psi_{j}^{(i)}|^{2}\\
 & \leq 2\Big(\|\psi_{j}^{(i)}\|_{a(\Omega\backslash K_{i,k-1})}^{2}+\|\psi_{j}^{(i)}\|_{s(\Omega\backslash K_{i,k-1})}^{2}\Big).
\end{align*}
We note that for each $K \in\mathcal{T}^H$, we have
\begin{align}
\|\psi_j^{(i)}\|_{s(K)}^2 &= \|(I-\pi)(\psi_j^{(i)})+\pi(\psi_j^{(i)})\|_{s(K)}^2 \nonumber\\
& = \|(I-\pi)(\psi_j^{(i)})\|_{s(K)}^2+\|\pi(\psi_j^{(i)})\|_{s(K)}^2 \nonumber\\
& \leq \Lambda^{-1} \|\psi_j^{(i)}\|_{a(K)}^2+\|\pi(\psi_j^{(i)})\|_{s(K)}^2.
\label{eq:s-norm_Bound}
\end{align}
Therefore, we have
\[
 \|(1-\chi_{i}^{k,k-1})\psi_{j}^{(i)}\|^2_a\leq 2\Big((1+\cfrac{1}{\Lambda})\|\psi_{j}^{(i)}\|_{a(\Omega\backslash K_{i,k-1})}^{2}+\|\pi(\psi_{j}^{(i)})\|_{s(\Omega\backslash K_{i,k-1})}^{2}\Big).
\]

\noindent
{\bf Step 2}: We will estimate the term $\|\pi \Big((1-\chi_{i}^{k,k-1})\psi_{j}^{(i)}\Big)\|^{2}_{s}$ in (\ref{eq:new1}).
Notice that
\begin{align*}
\|\pi \Big((1-\chi_{i}^{k,k-1})\psi_{j}^{(i)}\Big)\|^{2}_{s} &\leq \|\Big((1-\chi_{i}^{k,k-1})\psi_{j}^{(i)}\Big)\|^{2}_{s}
 \leq \|\psi_{j}^{(i)}\|^{2}_{s(\Omega\backslash K_{i,k-1})}
\end{align*}
By using \eqref{eq:s-norm_Bound}, we have
\[
 \|\pi \Big((1-\chi_{i}^{l,l-1})\psi_{j}^{(i)}\Big)\|^{2}_{s} \leq \Lambda^{-1} \|\psi_j^{(i)}\|_{a(\Omega\backslash K_{i,k-1})}^2
 +\|\pi(\psi^{(i)})\|_{s(\Omega\backslash K_{i,k-1})}^2.
\]

From the above two steps, we see that (\ref{eq:new1}) can be estimated as
\begin{equation}
\label{eq:new2}
\|\psi_{j}^{(i)}-\psi_{j,ms}^{(i)}\|_{a}^{2}+\|\pi(\psi_{j}^{(i)}-\psi_{j,ms}^{(i)})\|_{s}^{2}
\leq 3 (1+\Lambda^{-1}) \Big(  \|\psi_j^{(i)}\|_{a(\Omega\backslash K_{i,k-1})}^2+\|\pi(\psi_j^{(i)})\|_{s(\Omega\backslash K_{i,k-1})}^2\Big).
\end{equation}
Next we will estimate the right hand side of (\ref{eq:new2}).

\noindent
{\bf Step 3}: We will estimate $ \|\psi_j^{(i)}\|_{a(\Omega\backslash K_{i,k-1})}^2+\|\pi(\psi_j^{(i)})\|_{s(\Omega\backslash K_{i,k-1})}^2$.
We will show that this term can be estimated by $\|\psi_j^{(i)}\|^2_{(a({K_{i,k-1}\backslash K_{i,k-2}})} + \|\pi(\psi_j^{(i)})\|^2_{s({K_{i,k-1}\backslash K_{i,k-2}})}$.
By using the variational form (\ref{eq:glo-min-new-var}) and using the test function $(1-\chi_{i}^{k-1,k-2})\psi_{j}^{(i)}$, we have
\begin{equation}
\label{eq:new3}
a(\psi_{j}^{(i)},(1-\chi_{i}^{k-1,k-2})\psi_{j}^{(i)})+s(\pi(\psi_{j}^{(i)}),\pi\left((1-\chi_{i}^{k-1,k-2})\psi_{j}^{(i)}\right))=s(\phi_{j}^{(i)},\pi\left((1-\chi_{i}^{k-1,k-2})\psi_{j}^{(i)}\right))=0
\end{equation}
where the last equality follows from the facts that $\text{supp}(1-\chi_{i}^{k-1,k-2}) \subset \Omega \backslash K_{i,k-2}$
and $\text{supp}(\phi_j^{(i)}) \subset K_i$. Note that
\begin{equation*}
a(\psi_{j}^{(i)},(1-\chi_{i}^{k-1,k-2})\psi_{j}^{(i)})
= \int_{\Omega \backslash K_{i,k-2}} \kappa \nabla \psi_j^{(i)} \cdot \nabla ((1-\chi_{i}^{k-1,k-2})\psi_{j}^{(i)}),
\end{equation*}
so we have
\begin{equation*}
a(\psi_{j}^{(i)},(1-\chi_{i}^{k-1,k-2})\psi_{j}^{(i)})
= \int_{\Omega \backslash K_{i,k-2}} \kappa (1-\chi_{i}^{k-1,k-2}) |\nabla \psi_j^{(i)}|^2
- \int_{\Omega \backslash K_{i,k-2}} \kappa \psi_j^{(i)} \nabla \chi_{i}^{k-1,k-2} \cdot \nabla \psi_j^{(i)}.
\end{equation*}
Consequently, we have
\begin{equation}
\label{eq:bound1}
\begin{split}
\|\psi_j^{(i)}\|_{a(\Omega\backslash K_{i,k-1})}^2
&\leq \int_{\Omega \backslash K_{i,k-2}} \kappa (1-\chi_{i}^{k-1,k-2}) |\nabla \psi_j^{(i)}|^2 \\
&= a(\psi_{j}^{(i)},(1-\chi_{i}^{k-1,k-2})\psi_{j}^{(i)}) + \int_{\Omega \backslash K_{i,k-2}} \kappa \psi_j^{(i)} \nabla \chi_{i}^{k-1,k-2} \cdot \nabla \psi_j^{(i)} \\
&\leq a(\psi_{j}^{(i)},(1-\chi_{i}^{k-1,k-2})\psi_{j}^{(i)}) +  \|\psi_j^{(i)}\|_{a(K_{i,k-1}\backslash K_{i,k-2})}\|\psi_j^{(i)}\|_{s(K_{i,k-1} \backslash K_{i,k-2})}.
\end{split}
\end{equation}

Next, we note that, since $\chi_j^{k-1,k-2} \equiv 0$ in $\Omega \backslash K_{i,k-1}$, we have
\begin{equation*}
s(\pi(\psi_{j}^{(i)}),\pi\left((1-\chi_{i}^{k-1,k-2})\psi_{j}^{(i)}\right))
= \|\pi(\psi_{j}^{(i)})\|_{s(\Omega\backslash K_{i,k-1})}^{2} +
\int_{K_{i,k-1}\backslash K_{i,k-2}} \tilde{\kappa} \pi(\psi_{j}^{(i)})\pi((1-\chi_{i}^{k-1,k-2})\psi_{j}^{(i)})
\end{equation*}
Thus, we have
\begin{align}
&\|\pi(\psi_{j}^{(i)})\|_{s(\Omega\backslash K_{i,k-1})}^{2} \nonumber \\
=&s(\pi(\psi_{j}^{(i)}),\pi\left((1-\chi_{i}^{k-1,k-2})\psi_{j}^{(i)}\right))
-\int_{K_{i,k-1}\backslash K_{i,k-2}} \tilde{\kappa} \pi(\psi_{j}^{(i)})\pi((1-\chi_{i}^{k-1,k-2})\psi_{j}^{(i)}) \nonumber \\
 \leq& s(\pi(\psi_{j}^{(i)}),\pi\left((1-\chi_{i}^{k-1,k-2})\psi_{j}^{(i)}\right)) + \|\psi_j^{(i)}\|_{s({K_{i,k-1}\backslash K_{i,k-2}})}\|\pi (\psi_j^{(i)})\|_{s({K_{i,k-1}\backslash K_{i,k-2}})}.
 \label{eq:bound2}
\end{align}

Finally, summing (\ref{eq:bound1}) and (\ref{eq:bound2}) and using (\ref{eq:new3}), we have
\begin{align}
&\|\psi_{j}^{(i)}\|_{a(\Omega\backslash K_{i,k-1})}^{2}+\|\pi(\psi_{j}^{(i)})\|_{s(\Omega\backslash K_{i,k-1})}^{2} \nonumber \\
 \leq & \|\psi_j^{(i)}\|_{s({K_{i,k-1}\backslash K_{i,k-2}})}\Big(\|\pi(\psi_j^{(i)})\|_{(s({K_{i,k-1}\backslash K_{i,k-2}})} + \|\psi_j^{(i)}\|_{(a({K_{i,k-1}\backslash K_{i,k-2}})}\Big)\nonumber \\
\leq & 2(1+\Lambda^{-\frac{1}{2}})(\|\pi(\psi_j^{(i)})\|^2_{s({K_{i,k-1}\backslash K_{i,k-2}})} + \|\psi_j^{(i)}\|^2_{(a({K_{i,k-1}\backslash K_{i,k-2}})}) \label{eq:bound3}
\end{align}
where the last inequality follows from (\ref{eq:s-norm_Bound}).

\noindent
{\bf Step 4}: We will show that $ \|\psi_j^{(i)}\|_{a(\Omega\backslash K_{i,k-1})}^2+\|\pi(\psi_j^{(i)})\|_{s(\Omega\backslash K_{i,k-1})}^2$
can be estimated by $ \|\psi_j^{(i)}\|_{a(\Omega\backslash K_{i,k-2})}^2+\|\pi(\psi_j^{(i)})\|_{s(\Omega\backslash K_{i,k-2})}^2$. This recursive property
is crucial in our convergence estimate. To do so, we note that
%Since $\|\psi_{j}^{(i)}\|_{s(K_{i,k}\backslash K_{i,k-1})}^{2}=\|\pi(\psi_{j}^{(i)})\|_{s(K_{i,k}\backslash K_{i,k-1})}^{2}+\|(1-\pi)\psi_{j}^{(i)}\|_{s(K_{i,k}\backslash K_{i,k-1})}^{2}\leq\|\pi(\psi_{j}^{(i)})\|_{s(K_{i,k}\backslash K_{i,k-1})}^{2}+\cfrac{1}{\Lambda}\|\psi_{j}^{(i)}\|_{a(K_{i,k}\backslash K_{i,k-1})}^{2}$,
%we have
%\[
%\|\psi_{j}^{(i)}\|_{a(\Omega\backslash K_{i,k})}^{2}+\|\pi(\psi_{j}^{(i)})\|_{s(\Omega\backslash K_{i,k})}^{2}\leq C((1+\cfrac{1}{\Lambda})\|\psi_{j}^{(i)}\|_{a(K_{i,k}\backslash K_{i,k-1})}^{2}+\|\pi(\psi_{j}^{(i)})\|_{s(K_{i,k}\backslash K_{i,k-1})}^{2}).
%\]
 By using this, we have
\begin{eqnarray*}
&&\|\psi_{j}^{(i)}\|_{a(\Omega\backslash K_{i,k-2})}^{2}+\|\pi(\psi_{j}^{(i)})\|_{s(\Omega\backslash K_{i,k-2})}^{2} \\
& = & \|\psi_{j}^{(i)}\|_{a(\Omega\backslash K_{i,k-1})}^{2}+\|\pi(\psi_{j}^{(i)})\|_{s(\Omega\backslash K_{i,k-1})}^{2}
  +\|\psi_{j}^{(i)}\|_{a(K_{i,k-1}\backslash K_{i,k-2})}^{2}+\|\pi(\psi_{j}^{(i)})\|_{s(K_{i,k-1}\backslash K_{i,k-2})}^{2}\\
 & \geq &\Big (1+(2(1+\Lambda^{-\frac{1}{2}}))^{-1}\Big) (\|\psi_{j}^{(i)}\|_{a(\Omega\backslash K_{i,k})}^{2}+\|\pi(\psi_{j}^{(i)})\|_{s(\Omega\backslash K_{i,k})}^{2})
\end{eqnarray*}
where we used (\ref{eq:bound3}) in the last inequality. Using the above inequality recursively, we have
\[
\|\psi_{j}^{(i)}\|_{a(\Omega\backslash K_{i,k-1})}^{2}+\|\pi(\psi_{j}^{(i)})\|_{s(\Omega\backslash K_{i,k-1})}^{2}
\leq\Big (1+(2(1+\Lambda^{-\frac{1}{2}}))^{-1}\Big)^{1-k}(\|\psi_{j}^{(i)}\|_{a}^{2}+\|\pi(\psi_{j}^{(i)})\|_{s}^{2}).
\]
\end{proof}

Finally, we state and prove the convergence.

\begin{theorem}
\label{thm:conv2}
Let $u$ be the solution of (\ref{eq:finesol}) and $u_{ms}$ be
the multiscale solution of (\ref{eq:mssol}). Then we have
\[
\|u-u_{ms}\|_{a}\leq C\Lambda^{-\frac{1}{2}}
\| \tilde{\kappa}^{-\frac{1}{2}} f\|_{L^{2}}+C k^{d}E^{\frac{1}{2}} (1+D)^{\frac{1}{2}}\|u_{glo}\|_s
\]
where $u_{glo}\in V_{glo}$ is the multiscale solution using global basis.
Moreover, if $k=O(log(\cfrac{\max\{\kappa\}}{H}))$ and $\chi_{i}$ are bilinear partition
of unity, we have
\[
\|u-u_{ms}\|_{a}\leq CH\Lambda^{-\frac{1}{2}} \| \kappa^{-\frac{1}{2}} f\|_{L^{2}(\Omega)}.
\]

\end{theorem}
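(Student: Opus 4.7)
My plan is to mirror the proof of Theorem~\ref{thm:conv1}, replacing every invocation of Lemma~\ref{lem:decay} by Lemma~\ref{lem:new}. The setup is identical: by Galerkin orthogonality $\|u-u_{ms}\|_a \leq \|u-w\|_a$ for any $w\in V_{ms}$, so writing the global solution as $u_{glo}=\sum_{i,j}c_j^{(i)}\psi_j^{(i)}$ and choosing $w=\sum_{i,j}c_j^{(i)}\psi_{j,ms}^{(i)}\in V_{ms}$, I would split
$$
\|u-u_{ms}\|_a \;\leq\; \|u-u_{glo}\|_a \;+\; \Big\|\sum_{i,j}c_j^{(i)}\bigl(\psi_j^{(i)}-\psi_{j,ms}^{(i)}\bigr)\Big\|_a.
$$
The first term is bounded by Lemma~\ref{lem:glosol}, which still applies because the $a$-orthogonal decomposition $V=V_{glo}\oplus\tilde V$ was re-established at the start of Section~\ref{sec:mod} for the relaxed construction.

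For the second term I would invoke finite overlap of the oversampling supports $K_{i,k}$ to pick up the $k^d$ factor and reduce to a block-by-block estimate. Since the variational forms \eqref{eq:ms-min-new-var} and \eqref{eq:glo-min-new-var} are linear in their right-hand side, Lemma~\ref{lem:new} extends immediately to the linear combination $w_i:=\sum_j c_j^{(i)}\psi_j^{(i)}$, giving
$$
\Big\|\sum_j c_j^{(i)}\bigl(\psi_j^{(i)}-\psi_{j,ms}^{(i)}\bigr)\Big\|_a^2 \;\leq\; E\,\bigl(\|w_i\|_a^2+\|\pi w_i\|_s^2\bigr).
$$
Testing the linearly combined variational identity for $w_i$ against $w_i$ itself, and noting that $\sum_j c_j^{(i)}\phi_j^{(i)}$ is supported in $K_i$, I obtain $\|w_i\|_a^2+\|\pi w_i\|_s^2 \leq \|\sum_j c_j^{(i)}\phi_j^{(i)}\|_{s(K_i)}^2 = \sum_j(c_j^{(i)})^2\|\phi_j^{(i)}\|_{s(K_i)}^2$ by the local $s_i$-orthogonality of the eigenfunctions. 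Summing over $i$ leaves $\sum_{i,j}(c_j^{(i)})^2\|\phi_j^{(i)}\|_s^2$, which must then be compared to $\|u_{glo}\|_s^2$.

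The hard part is this last comparison, and it is where the $(1+D)^{1/2}$ overhead appears. In the constrained setting of Theorem~\ref{thm:conv1} one had $\pi\psi_j^{(i)}=\phi_j^{(i)}$ exactly, so $\sum(c_j^{(i)})^2\|\phi_j^{(i)}\|_s^2=\|\pi u_{glo}\|_s^2\leq\|u_{glo}\|_s^2$ was immediate; in the relaxed setting this identity only holds approximately. Plugging the lift from Lemma~\ref{lem:infsup} into the minimization~\eqref{eq:glo-min-new} yields $\|\pi\psi_j^{(i)}-\phi_j^{(i)}\|_s^2\leq D\|\phi_j^{(i)}\|_s^2$, and a careful accounting of $\pi(u_{glo})=\sum c_j^{(i)}\pi\psi_j^{(i)}$ (controlling the cross-terms between different basis functions without losing a factor that grows with the number of basis functions is the subtle point) should then deliver $\sum_{i,j}(c_j^{(i)})^2\|\phi_j^{(i)}\|_s^2\leq C(1+D)\|u_{glo}\|_s^2$. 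Once this is in hand, the first displayed inequality of the theorem follows, and the second inequality reduces to the same book-keeping as in Theorem~\ref{thm:conv1}: bound $\|u_{glo}\|_s$ using the relaxed energy estimate and choose $k=O(\log(\max\kappa/H))$ so that $E^{1/2}$ dominates the remaining contrast- and mesh-dependent factors, with the extra $H$ coming from the bilinear partition-of-unity version of Lemma~\ref{lem:glosol}.
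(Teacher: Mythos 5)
Your skeleton coincides with the paper's own proof: Galerkin orthogonality, the splitting through $u_{glo}$, the $k^d$ factor from finite overlap of the supports $K_{i,k}$, the application of Lemma \ref{lem:new} to the whole local combination $w_i=\sum_j c_j^{(i)}\psi_j^{(i)}$, and the energy identity (test \eqref{eq:glo-min-new-var}, summed over $j$, with $v=w_i$) giving $\|w_i\|_a^2+\|\pi w_i\|_s^2\leq\|\sum_j c_j^{(i)}\phi_j^{(i)}\|_s^2=\sum_j (c_j^{(i)})^2$ are all exactly the paper's steps. The gap is the step you yourself single out as "the hard part": the comparison $\sum_{i,j}(c_j^{(i)})^2\leq C(1+D)\|u_{glo}\|_s^2$ is the real content of this theorem (it is what produces the $(1+D)^{1/2}$ factor and distinguishes the relaxed case from Theorem \ref{thm:conv1}), and your proposal does not prove it. The route you sketch --- the per-basis bound $\|\pi\psi_j^{(i)}-\phi_j^{(i)}\|_s^2\leq D\|\phi_j^{(i)}\|_s^2$ (true, by comparing the minimizer in \eqref{eq:glo-min-new} against the competitor $\tilde\phi_j^{(i)}$ from Lemma \ref{lem:infsup}) followed by "careful accounting" of $\pi u_{glo}=\sum_{i,j} c_j^{(i)}\pi\psi_j^{(i)}$ --- fails for precisely the reason you flag: assembling the per-basis perturbations by triangle inequality and Cauchy--Schwarz costs a factor of $p=\sum_i l_i$, the total number of basis functions, and your outline supplies no mechanism to avoid that loss. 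Note also that $D\geq 1$, so the individual perturbations $\pi\psi_j^{(i)}-\phi_j^{(i)}$ are not small in any sense; no perturbative or Neumann-series reading of your bound can rescue this route.

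The paper's mechanism is genuinely different and is the missing idea: it never estimates $\pi\psi_j^{(i)}-\phi_j^{(i)}$ at all. Set $b_{lk}=s(\pi u_{glo},\phi_k^{(l)})$; testing \eqref{eq:glo-min-new-var} with $v=\psi_k^{(l)}$ shows $\vec b=A\vec c$, where $A$ is the symmetric positive definite matrix of the bilinear form $a(\psi_j^{(i)},\psi_k^{(l)})+s(\pi\psi_j^{(i)},\pi\psi_k^{(l)})$. Coercivity of $A$ is then proved for an arbitrary coefficient vector all at once: with $\phi=\sum_{i,j}c_j^{(i)}\phi_j^{(i)}$ and $\psi=\sum_{i,j}c_j^{(i)}\psi_j^{(i)}$, take the lift $z$ from Lemma \ref{lem:infsup} (so $\pi z=\phi$ and $\|z\|_a^2\leq D\|\phi\|_s^2$) and test the summed identity $a(\psi,w)+s(\pi\psi,\pi w)=s(\phi,\pi w)$ with $w=z$, which gives $\|\vec c\|_2^2=s(\phi,\phi)=a(\psi,z)+s(\pi\psi,\phi)\leq(1+D)^{1/2}\|\vec c\|_2\bigl(\|\psi\|_a^2+\|\pi\psi\|_s^2\bigr)^{1/2}$, i.e. $\vec c^{\,T}\vec c\leq(1+D)\,\vec c^{\,T}A\vec c$. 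Because the whole linear combination is treated as a single object, the cross terms you worried about never arise. This yields $\|\vec c\|_2$ controlled by $(1+D)\|\vec b\|_2$, and since the $\phi_k^{(l)}$ are $s$-orthonormal, $\|\vec b\|_2\leq\|\pi u_{glo}\|_s\leq\|u_{glo}\|_s$, which closes the first inequality; after that, your remaining book-keeping (the bound on $\|u_{glo}\|_s$ and the choice $k=O(\log(\max\{\kappa\}/H))$) does proceed exactly as in Theorem \ref{thm:conv1}. Until you supply this Gram-matrix/inf-sup argument or an equivalent one, the proof is incomplete at its central step.
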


\begin{proof}
The proof follows the same procedure as the proof of Theorem \ref{thm:conv1}.
We write $u_{glo}=\sum_{i=1}^N \sum_{j=1}^{l_i} c_{ij}\psi_{j}^{(i)}$
and define $v=\sum_{i=1}^N \sum_{j=1}^{l_i} c_{ij}\psi_{j,ms}^{(i)}$.
It suffices to estimate $\|u_{glo}-v\|_a$. By Lemma \ref{lem:new},
\begin{align*}
\|u_{glo}-v\|_a^2 & \leq C k^{2d}\sum_{i=1}^N\|\sum_{j=1}^{l_{i}}c_{ij}(\psi_{j}^{(i)}-\psi_{j,ms}^{(i)})\|_{a}^{2}\\
 & \leq C k^{2d}E \sum_{i=1}^N \sum_{j=1}^{l_i} \|c_{ij} \phi_{j}^{(i)}\|_{s}^{2} \\
  & = C k^{2d}E \sum_{i=1}^N \sum_{j=1}^{l_i} (c_{ij})^{2}
\end{align*}
since $\|\phi_{j}^{(i)}\|_{s}=1$. Notice that, in the above, we use Lemma \ref{lem:new}
to the function $\sum_{j=1}^{l_{i}}c_{ij}(\psi_{j}^{(i)}-\psi_{j,ms}^{(i)})$.

Note that, we have
\begin{equation*}
\pi u_{glo}=\sum_{i=1}^N \sum_{j=1}^{l_i} c_{ij}\pi\psi_{j}^{(i)}.
\end{equation*}
So, we obtain
\begin{equation*}
s(\pi u_{glo}, \phi_k^{(l)}) = \sum_{i=1}^N \sum_{j=1}^{l_i} c_{ij} s(\pi\psi_{j}^{(i)},\phi_k^{(l)}).
\end{equation*}
Using the variational problem (\ref{eq:glo-min-new-var}), we have
\begin{equation*}
s(\pi u_{glo}, \phi_k^{(l)}) = \sum_{i=1}^N \sum_{j=1}^{l_i} c_{ij} \Big( s(\pi(\psi_{j}^{(i)}),\pi(\psi_{k}^{(l)})) + a(\psi_{j}^{(i)},\psi_{k}^{(l)}) \Big).
\end{equation*}
Let $b_{lk} = s(\pi u_{glo}, \phi_k^{(l)})$ and $\vec{b} = (b_{lk})$.
We have
\begin{equation}
\label{eq:A}
\| \vec{c}\|_{2} \leq \| A^{-1} \|_2 \, \| \vec{b}\|_2
\end{equation}
where $A\in\mathbb{R}^{p\times p}$ is the matrix representation
of the bilinear form $\Big( s(\pi(\psi_{j}^{(i)}),\pi(\psi_{k}^{(l)})) + a(\psi_{j}^{(i)},\psi_{k}^{(l)}) \Big)$
where $p = \sum_{i=1}^N l_i$, and $\vec{c}=(c_{ij})$.
We will derive a bound for the largest eigenvalue of $A^{-1}$.
For a given $\vec{c}\in\mathbb{R}^p$, we define an auxiliary function $\phi = \sum_{i=1}^{N} \sum_{j=1}^{l_i} c_{ij} \phi_j^{(i)} \in V_{aux}$.
Using the variational problem (\ref{eq:glo-min-new-var}), there is $\psi\in V$ such that
\begin{equation}
\label{eq:thm2}
a(\psi, w) + s(\pi\psi, \pi w) = s(\phi, \pi w), \quad \forall w\in V
\end{equation}
and $\psi = \sum_{i=1}^{N} \sum_{j=1}^{l_i} c_{ij} \psi_j^{(i)}$.
Using the given $\phi \in V_{aux}$, by Lemma \ref{lem:infsup}, there is $z\in V$ such that
\begin{equation*}
\pi z = \phi, \quad \| z\|^2_a \leq D \|\phi\|^2_s.
\end{equation*}
Taking $w=z$ in (\ref{eq:thm2}),
\begin{equation*}
a(\psi, z) + s(\pi\psi, \pi z) = s(\phi, \pi z).
\end{equation*}
Notice that $s(\phi, \pi z) = s(\phi,\phi) = \|\vec{c}\|_2^2$. Thus,
\begin{equation*}
\| \vec{c}\|_2^2 = a(\psi, z) + s(\pi\psi, \phi)
\leq \| \psi\|_a \|z\|_a + \| \pi\psi\|_s \, \|\phi\|_s
\leq (1 + D)^{\frac{1}{2}} \| \phi\|_s \, \Big( \|\psi\|_a^2 + \| \pi\psi\|_s^2 \Big)^{\frac{1}{2}}
\end{equation*}
From the above, we see that the largest eigenvalue of $A^{-1}$ is $(1+D)^{\frac{1}{2}}$. So, we can estimate (\ref{eq:A}) as
\begin{equation}
\label{eq:A1}
\| \vec{c}\|_{2}^2 \leq  (1+D) \, \| \vec{b}\|^2_2 \leq  (1+D) \| u_{glo}\|_s^2.
\end{equation}
Using (\ref{eq:A1}), we conclude that
\begin{equation*}
\|u_{glo}-v\|_a^2 \leq Ck^{2d} E (1+D) \| u_{glo}\|_s^2.
\end{equation*}
The rest of the proof follows from the proof of Theorem \ref{thm:conv1}.

\end{proof}

Finally, we remark that the above theorem provides an improved bound
compared with Theorem \ref{thm:conv1}, since the convergence rate
in Theorem \ref{thm:conv2} is independent of the constant $D$.
This is confirmed by our numerical results presented next.

%{\bf Include one numerical example with two tables}

\subsection{Numerical Result}

In this section, we present numerical results to show the performance
of the relaxed version of the method.
As predicted by the theory, the relaxed version
of the method is more robust with respect to the contrast.
We will consider two test cases, which are the same as those considered in Section \ref{sec:num}.
First, in Table~\ref{tab:case1_error_new},
we show the errors for the first test case
using different choices of coarse mesh sizes.
% and different
%choices of contrast values.
We clearly see that the method gives the predicted convergence rate
since we have included enough eigenfunctions in the auxiliary space.
More importantly, by comparing to the similar test case
 in Table~\ref{tab:case1_error},
we see that the relaxed version is able to improve the number of oversampling layers.
In particular, we see that one needs fewer oversampling layers
and obtains much better results.

\begin{table}[ht!]
\centering

\begin{tabular}{|c|c|c|c|c|}
\hline
Number basis per $K$ & H & \# oversampling coarse layers & $L_{2}$error & energy error\tabularnewline
\hline
3 & 1/10 & 3 & 0.33\% & 3.73\%\tabularnewline
\hline
3 & 1/20 & 4 (log(1/20)/log(1/10){*}3=3.9031) & 0.047\% & 1.17\%\tabularnewline
\hline
3 & 1/40 & 5 (log(1/40)/log(1/10){*}3=4.8062) & 0.010\% & 0.47\%\tabularnewline
\hline
3 & 1/80 & 6 (log(1/40)/log(1/10){*}3=5.7093) & 0.0015\% & 0.15\%\tabularnewline
\hline
\end{tabular}

\protect\caption{Numerical result for the test case $1$ with the relaxed method.}
\label{tab:case1_error_new}
\end{table}

In Table~\ref{tab:case1_compare_new}, we show the performance of the relaxed version
with respect to the relation between contrast values and number of oversampling layers.
From the results, we see that the relaxed version needs a much smaller number
of oversampling layers in order to achieve a good result.
In particular, for a given oversampling layer, it can handle a much larger contrast value.
This confirms the theoretical estimates.
We performed a similar computation for the test case $2$
and obtain the same conclusion. For the numerical results, see
Table~\ref{tab:case2_error_new} and Table~\ref{tab:case2_compare_new}

\begin{table}
\centering
\begin{tabular}{|c|c|c|c|c|}
\hline
 Layer $\backslash$ Contrast & 1e+4 & 1e+6 & 1e+8 & 1e+10 \tabularnewline
\hline
3 & 3.73\% & 3.89\% & 11.99\% & 65.19\% \tabularnewline
\hline
4 & 3.72\% & 3.72\% & 3.73\% & 5.14\% \tabularnewline
\hline
5 & 3.72\% & 3.72\% & 3.73\% & 3.72\% \tabularnewline
\hline
\end{tabular}

\caption{Comparison for the test case $1$ with the relaxed method.}
\label{tab:case1_compare_new}
\end{table}

\begin{table}[ht!]
\centering

\begin{tabular}{|c|c|c|c|c|}
\hline
Number basis per element & H & \# oversampling coarse layers & $L_{2}$error & energy error\tabularnewline
\hline
4 & 1/10 & 4 & 0.11\% & 1.50\%\tabularnewline
\hline
4 & 1/20 & 6 (log(1/20)/log(1/10){*}4=5.2041) & 0.021\% & 0.57\%\tabularnewline
\hline
4 & 1/40 & 7 (log(1/40)/log(1/10){*}4=6.4082) & 0.0042\% & 0.23\%\tabularnewline
\hline
\end{tabular}

\protect\caption{Numerical result for the test case $2$ with the relaxed method.}
\label{tab:case2_error_new}
\end{table}

\begin{table}
\centering
\begin{tabular}{|c|c|c|c|}
\hline
 Layer $\backslash$ Contrast & 1e+6 & 1e+8 & 1e+10\tabularnewline
\hline
3 & 5.00\% & 41.11\% & 83.26\% \tabularnewline
\hline
4 & 1.50\% & 1.50\% & 11.88\% \tabularnewline
\hline
5 & 1.50\% & 1.50\% & 1.50\% \tabularnewline
\hline
\end{tabular}

\caption{Comparison for the test case $2$ with the relaxed method.}
\label{tab:case2_compare_new}
\end{table}

Finally, we test the performance with different choices of eigenfunctions
in the auxiliary space, and the results are shown in Table~\ref{tab:case2_basis_new}.
We consider the second medium parameter $\kappa$.
For this medium, there are $4$ high-contrast channels in some coarse blocks,
and therefore we need to use $4$ eigenfunctions in the auxiliary space.
As predicted by our theory, using less eigenfunctions will result
in a poor decay in the multiscale basis functions, the hence poor performance of the scheme.
This fact is confirmed by using one, two or three basis functions per coarse blocks.
We see that using $4$ basis functions will significantly improve the performance.
We also note that, using more than $4$ basis functions
will not necessarily improve the result further.

\begin{table}[ht!]
\centering

\begin{tabular}{|c|c|c|c|c|}
\hline
Number basis per element & H & \# oversampling coarse layers & $L_{2}$error & energy error\tabularnewline
\hline
1 & 1/10 & 4 & 77.30\% & 87.07\%\tabularnewline
\hline
2 & 1/10 & 4 & 30.21\% & 49.66\%\tabularnewline
\hline
3 & 1/10 & 4 & 24.27\% & 44.46\%\tabularnewline
\hline
4 & 1/10 & 4 & 0.11\% & 1.50\%\tabularnewline
\hline
5 & 1/10 & 4 & 0.08\% & 1.26\%\tabularnewline
\hline
\end{tabular}

\protect\caption{Using various numbers of basis functions for the test case $2$.}
\label{tab:case2_basis_new}
\end{table}

\section{Conclusions}
\label{sec:conclusions}

In this paper, we propose Constraint Energy Minimizing GMsFEM for solving
flow equations in high-contrast media.
The proposed method first constructs
an auxiliary space, which uses eigenvectors corresponding to small
eigenvalues in the local spectral problem.
This space contains
the subgrid information, which can not be localized and
it is a minimal dimensional space that one needs for preconditioning
and obtaining the errors that do not depend on the contrast.
Next, using local constraint energy minimizing construction in the
oversampled domain, we construct multiscale basis functions.
The constraint consists of some type of orthogonality with respect
to the auxiliary space. The choice of the auxiliary space is
 important to guarantee that the method converges as we decrease the mesh
size and the convergence is independent of the contrast if the
oversampling domain size is appropriately selected. Our main
theorem shows that the convergence depends on the oversampling domain
size that depends on the contrast.
We note that this is achieved with a minimal dimensional coarse space.
To remove this effect, we propose
a relaxation in imposing the constraint.
In our numerical results, we vary the number of oversampling layers, the contrast, the coarse-mesh size, and the number of auxiliary multiscale basis functions.
Our numerical results show that one needs a minimum number of auxiliary basis
functions to provide a good accuracy, which does not depend on the contrast.
The numerical results are presented,
which confirm our theoretical findings.

\section{Acknowledgements}

EC and YE would like to acknowledge the support of Hausdorff Institute
of Mathematics and Institute for Pure and Applied Mathematics for hosting
their long-term visits. WTL would like to acknowledge the support of
 Institute for Pure and Applied Mathematics for his long-term visits.

\bibliographystyle{plain}
\bibliography{references,references1,references_outline}

\begin{thebibliography}{10}

\bibitem{abdul_yun}
{\sc A.~Abdulle and Y.~Bai}, {\em Adaptive reduced basis finite element
  heterogeneous multiscale method}, Comput. Methods Appl. Mech. Engrg., 257
  (2013), pp.~203--220.

\bibitem{abdulle2012heterogeneous}
{\sc A.~Abdulle, E.~Weinan, B.~Engquist, and E.~Vanden-Eijnden}, {\em The
  heterogeneous multiscale method}, Acta Numerica, 21 (2012), pp.~1--87.

\bibitem{apwy07}
{\sc T.~Arbogast, G.~Pencheva, M.~Wheeler, and I.~Yotov}, {\em A multiscale
  mortar mixed finite element method}, SIAM J. Multiscale Modeling and
  Simulation, 6 (2007), pp.~319--346.

\bibitem{bour84}
{\sc A.~Bourgeat}, {\em Homogenized behavior of two-phase flows in naturally
  fractured reservoirs with uniform fractures distribution}, Comp. Meth. Appl.
  Mech. Engrg., 47 (1984), pp.~205--215.

\bibitem{bfhr97}
{\sc F.~Brezzi, L.~P. Franca, T.~J.~R. Hughes, and A.~Russo}, {\em $b = \int g
  $}, Comput. Methods in Appl. Mech. and Engrg., 145 (1997), pp.~329--339.

\bibitem{randomized2014}
{\sc V.~Calo, Y.~Efendiev, J.~Galvis, and G.~Li}, {\em Randomized oversampling
  for generalized multiscale finite element methods},
  http://arxiv.org/pdf/1409.7114.pdf,  (2014).

\bibitem{cdgw03}
{\sc Y.~Chen, L.~Durlofsky, M.~Gerritsen, and X.~Wen}, {\em A coupled
  local-global upscaling approach for simulating flow in highly heterogeneous
  formations}, Advances in Water Resources, 26 (2003), pp.~1041--1060.

\bibitem{chung2016adaptive}
{\sc E.~Chung, Y.~Efendiev, and T.~Y. Hou}, {\em Adaptive multiscale model
  reduction with generalized multiscale finite element methods}, Journal of
  Computational Physics, 320 (2016), pp.~69--95.

\bibitem{chung2014adaptive}
{\sc E.~Chung, Y.~Efendiev, and G.~Li}, {\em An adaptive {GMsFEM} for
  high-contrast flow problems}, Journal of Computational Physics, 273 (2014),
  pp.~54--76.

\bibitem{chung2015generalizedwave}
{\sc E.~T. Chung, Y.~Efendiev, and W.~T. Leung}, {\em Generalized multiscale
  finite element methods for wave propagation in heterogeneous media},
  Multiscale Modeling \& Simulation, 12 (2014), pp.~1691--1721.

\bibitem{cp95}
{\sc M.~Cruz and A.~Petera}, {\em A parallel monte-carlo finite element
  procedure for the analysis of multicomponent random media}, Int. J. Numer.
  Methods Engrg., 38 (1995), pp.~1087--1121.

\bibitem{dur91}
{\sc L.~Durlofsky}, {\em Numerical calculation of equivalent grid block
  permeability tensors for heterogeneous porous media}, Water Resour. Res., 27
  (1991), pp.~699--708.

\bibitem{durlofsky2003upscaling}
{\sc L.~J. Durlofsky}, {\em Upscaling of geocellular models for reservoir flow
  simulation: a review of recent progress}, in 7th International Forum on
  Reservoir Simulation B{\"u}hl/Baden-Baden, Germany, Citeseer, 2003,
  pp.~23--27.

\bibitem{dk92}
{\sc B.~Dykaar and P.~K. Kitanidis}, {\em Determination of the effective
  hydraulic conductivity for heterogeneous porous media using a numerical
  spectral approach: 1. method}, Water Resour. Res., 28 (1992), pp.~1155--1166.

\bibitem{ee03}
{\sc W.~E and B.~Engquist}, {\em Heterogeneous multiscale methods}, Comm. Math.
  Sci., 1 (2003), pp.~87--132.

\bibitem{egh12}
{\sc Y.~Efendiev, J.~Galvis, and T.~Hou}, {\em Generalized multiscale finite
  element methods}, Journal of Computational Physics, 251 (2013), pp.~116--135.

\bibitem{egw10}
{\sc Y.~Efendiev, J.~Galvis, and X.~Wu}, {\em Multiscale finite element methods
  for high-contrast problems using local spectral basis functions}, Journal of
  Computational Physics, 230 (2011), pp.~937--955.

\bibitem{engquist2013heterogeneous}
{\sc B.~Engquist and O.~Runborg}, {\em Heterogeneous multiscale methods},
  Communications in Mathematical Science, 1 (2013), p.~132.

\bibitem{ge09_1reduceddim}
{\sc J.~Galvis and Y.~Efendiev}, {\em Domain decomposition preconditioners for
  multiscale flows in high contrast media. {R}educed dimensional coarse
  spaces}, SIAM J. Multiscale Modeling and Simulation, 8 (2010),
  pp.~1621--1644.

\bibitem{hw97}
{\sc T.~Hou and X.~Wu}, {\em A multiscale finite element method for elliptic
  problems in composite materials and porous media}, J. Comput. Phys., 134
  (1997), pp.~169--189.

\bibitem{hou2017}
{\sc T.~Hou and P.~Zhang},  (2017).
\newblock private communications.

\bibitem{hughes95}
{\sc T.~Hughes}, {\em Multiscale phenomena: Green's functions, the
  dirichlet-to-neumann formulation, subgrid scale models, bubbles and the
  origins of stabilized methods}, Comput. Methods Appl. Mech Engrg., 127
  (1995), pp.~387--401.

\bibitem{hfmq98}
{\sc T.~Hughes, G.~Feij\'oo, L.~Mazzei, and J.-B. Quincy}, {\em The variational
  multiscale method - a paradigm for computational mechanics}, Comput. Methods
  Appl. Mech Engrg., 127 (1998), pp.~3--24.

\bibitem{jp05}
{\sc R.~Juanes and T.~W. Patzek}, {\em A variational multiscale finite element
  method for multiphase flow in porous media}, Finite Elements in Analysis and
  Design, 41 (2005), pp.~763--777.

\bibitem{kghkrt03}
{\sc I.~Kevrekidis, C.~Gear, J.~Hyman, P.~Kevrekidis, O.~Runborg, and
  C.~Theodoropoulos}, {\em Equation-free, coarse-grained multiscale
  computation: enabling microscopic simulators to perform system-level
  analysis}, Commun. Math. Sci., 1 (2003), pp.~715--762.

\bibitem{lkgk07}
{\sc J.~Li, P.~Kevrekidis, C.~W. Gear, and I.~Kevrekidis}, {\em Deciding the
  nature of the coarse equation through microscopic simulations: the
  baby-bathwater scheme}, SIAM Rev., 49 (2007), pp.~469--487.

\bibitem{maalqvist2014localization}
{\sc A.~M{\aa}lqvist and D.~Peterseim}, {\em Localization of elliptic
  multiscale problems}, Mathematics of Computation, 83 (2014), pp.~2583--2603.

\bibitem{ming2007analysis}
{\sc P.~Ming and P.~Zhang}, {\em Analysis of the heterogeneous multiscale
  method for parabolic homogenization problems}, Mathematics of Computation, 76
  (2007), pp.~153--177.

\bibitem{npp08}
{\sc J.~Nolen, G.~Papanicolaou, and O.~Pironneau}, {\em A framework for
  adaptive multiscale method for elliptic problems}, SIAM J. Multiscale
  Modeling and Simulation, 7 (2008), pp.~171--196.

\bibitem{owhadi2014polyharmonic}
{\sc H.~Owhadi, L.~Zhang, and L.~Berlyand}, {\em Polyharmonic homogenization,
  rough polyharmonic splines and sparse super-localization}, ESAIM:
  Mathematical Modelling and Numerical Analysis, 48 (2014), pp.~517--552.

\bibitem{pk07}
{\sc A.~Papavasiliou and I.~Kevrekidis}, {\em Variance reduction for the
  equation-free simulation of multiscale stochastic systems}, SIAM J.
  Multiscale Modeling and Simulation, 6 (2007), pp.~70--89.

\bibitem{pes05}
{\sc M.~Peszy\'nska}, {\em Mortar adaptivity in mixed methods for flow in
  porous media}, Int. J. Numer. Anal. Model., 2 (2005), pp.~241--282.

\bibitem{pwy02}
{\sc M.~Peszy\'nska, M.~Wheeler, and I.~Yotov}, {\em Mortar upscaling for
  multiphase flow in porous media}, Comput. Geosci., 6 (2002), pp.~73--100.

\bibitem{rk07}
{\sc A.~Roberts and I.~Kevrekidis}, {\em General tooth boundary conditions for
  equation free modeling}, SIAM J. Sci. Comput., 29 (2007), pp.~1495--1510.

\bibitem{weh02}
{\sc X.~Wu, Y.~Efendiev, and T.~Hou}, {\em Analysis of upscaling absolute
  permeability}, Discrete and Continuous Dynamical Systems, Series B., 2
  (2002), pp.~158--204.

\end{thebibliography}

\end{document}